\definecolor{midnightblue}{rgb}{0.1, 0.1, 0.44}
\setlist[enumerate,1]{label=(\roman*)}
\DeclareMathAlphabet{\mathpzc}{OT1}{pzc}{m}{it} %% needed for newcommands \leb and \hausd
\newtheorem{Thm}{Theorem}[section]
\newtheorem{Lem}{Lemma}[section]
\newtheorem{Prop}{Proposition}[section]
\newtheorem{Def}{Definition}[section]
\newtheorem{Pb}{Problem}[section]
\theoremstyle{definition}
\theoremstyle{definition}
\newcommand\void{\varnothing} %% empty set  
\newcommand\setmeno{\!\smallsetminus\!} %% set theoretical difference
\newcommand\parti{\mathscr{P}} %% insieme delle parti
\newcommand\function{\longrightarrow} %% arrow for functions 
\newcommand\en{\mathbb{N}} %% set of positive integers
\newcommand\ar{\mathbb{R}} %% set of real numbers
\newcommand{\eps}{\varepsilon} %% epsilon  
\providecommand{\clint}[1]{\hspace{0.045ex}\left[#1\right]} %% closed interval 
\providecommand{\opint}[1]{\hspace{0.15ex}\left]#1\right[\hspace{0.15ex}} %% open interval
\providecommand{\clsxint}[1]{\hspace{0.1ex}\left[#1\right[\hspace{0.15ex}} %% left (sinister) closed interval  
\providecommand{\cldxint}[1]{\hspace{0.15ex}\left]#1\right]} %% right (dexter) closed interval
\DeclareMathOperator{\Int}{int} %% topological interior  
\newcommand{\borel}{\mathscr{B}} %% Borel sets
\DeclareMathOperator{\de}{d \! \hspace{0.2ex}} %% differential ''d'' in the integral  
\newcommand\leb{\mathpzc{L}} %% Lebesgue measure 
\newcommand\vartot[1]{\!\left\bracevert\! #1 \!\right\bracevert\!} %% total variation measure
\renewcommand\H{\mathcal{H}} %% Hilbert space H  
\newcommand\duality[2]{\langle #1,#2 \rangle} %% duality in H
\newcommand\lduality[2]{\left\langle #1,#2 \right\rangle} %% adapted duality in H
\newcommand\norm[2]{\Vert #1\Vert_{#2}} %%  #2-norm of #1 in H
\newcommand\lnorm[2]{\left\Vert #1\right\Vert_{#2}} %%  #2-norm of #1 in H
\renewcommand\d{\textsl{d}} %% d for distance
\DeclareMathOperator{\Proj}{Proj} %% projections set  
\renewcommand{\P}{\mathrm{P}} %% projection set operator  
\newcommand{\Conv}{\mathscr{C}} %% families of certain closed subsets of H  
\newcommand\A{\mathcal{A}} %% subset A
\newcommand\B{\mathcal{B}} %% subset B
\newcommand\Z{\mathcal{Z}} %% subset Z
\newcommand\K{\mathcal{K}} %% subset K
\newcommand\e{\textsl{e}} % e for excess
\DeclareMathOperator{\V}{V} %% pointwise variation  
\newcommand{\BV}{{\textsl{BV}\hspace{0.17ex}}} %% BV spaces 
\newcommand{\Czero}{{\textsl{C}\hspace{0.18ex}}} %% continuous functions spaces
\renewcommand{\L}{{\textsl{L}\hspace{0.17ex}}} %% L for Lebesgue spaces  
\DeclareMathOperator{\D}{D\!} %% differential measure/distributional derivative  
\newcommand{\convergedebstar}{\stackrel{*}{\rightharpoonup}}
\newcommand\indicator{\mathds{1}} %% indicator function
\newcommand{\Step}{{\textsl{St}\hspace{0.17ex}}} %% Step maps
\newcommand\hausd{\mathpzc{H}} %% H for Hausdorff distance
\newcommand\C{\mathcal{C}} %% convex set C
\DeclareMathOperator{\pV}{V} %% pointwise variation  
\definecolor{blu}{rgb}{0.1,0.1,1}
\definecolor{green}{rgb}{0.0, 0.5, 0.0}
\definecolor{marr}{rgb}{0.63, 0.47, 0.35}%{0.47, 0.27, 0.23}%{0.62, 0.51, 0.44}
\begin{document}

%%%%%%%%%%%%%%
%%%%%%%%% TOP MATTER

\title[Sweeping processes]{Excess-continuous prox-regular sweeping processes}

\author{Vincenzo Recupero, Federico Stra}
\thanks{Vincenzo Recupero is partially supported by the GNAMPA-INdAM project 2025 ``Analisi e controllo di modelli evolutivi con fenomeni non locali''.}

\date{July 28, 2025}

\dedicatory{Dedicated to Prof.\ Alexander Plakhov on the occasion of his 65\textsuperscript{th} birthday.}

\address{
	\textbf{Vincenzo Recupero} \\
	Dipartimento di Scienze Matematiche \\
	Politecnico di Torino \\
	C.so Duca degli Abruzzi 24 \\
	I-10129 Torino \\
	Italy. \newline
	{\rm E-mail address:}
	{\tt vincenzo.recupero@polito.it}  }
\address{
	\textbf{Federico Stra} \\
	Dipartimento di Scienze Matematiche \\
	Politecnico di Torino \\
	C.so Duca degli Abruzzi 24 \\
	I-10129 Torino \\
	Italy. \newline
	{\rm E-mail address:}
	{\tt federico.stra@polito.it}
}

\subjclass[2020]{34G25, 34A60, 47J20, 74C05}
\keywords{Evolution variational inequalities, Sweeping processes, Prox-regular sets, Hausdorff distance, Excess, Functions of bounded variation}

%% 2010 classification
%% 34A60 Differential inclusions
%% 34G25 Evolution inclusions
%% 47J20 Variational and other types of inequalities involving nonlinear operators (general)
%% 49J52 Nonsmooth analysis
%% 74C05: Small-strain, rate-independent theories (including rigid-plastic and elasto-plastic materials)
%% 74C15 Large-strain, rate-independent theories (including nonlinear plasticity)

%%%%%%%%%% ABSTRACT

\begin{abstract}
	In this paper we consider the Moreau's sweeping processes driven by a time dependent prox-regular
	set $\C(t)$ which is continuous in time with respect to the asymmetric distance $\e$ called
	\emph{the excess},  defined by $\e(\A,\B) := \sup_{x \in \A} \d(x,\B)$ for every pair of sets
	$\A$, $\B$ in a Hilbert space. As observed by J.J. Moreau in his pioneering works, the excess
	provides the natural topological framework for sweeping process. Assuming a uniform interior cone
	condition for $\C(t)$, we prove that the associated sweeping process has a unique solution,
	thereby improving the existing result on continuous prox-regular sweeping processes in two
	directions: indeed, in the previous literature $\C(t)$ was supposed to be continuous in time with
	respect to the symmetric Hausdorff distance instead of the excess and also its boundary
	$\partial\C(t)$ was required to be continuous in time, an assumption which we completely drop.
	Therefore our result allows to consider a much wider class of continuously moving constraints.
\end{abstract}

%%%%%%%%%% TITLE

\maketitle

\tableofcontents

\thispagestyle{empty}

%%%%%%%%%%%%%%%%%%%%%%%%%%%%%%%%%%%%%%%%
%%%%%%%%%%%%%%%%%%%%%%%%%%%%%%%%%%%%%%%%

\section{Introduction}

\emph{Sweeping processes} are a class of evolution problems with unilateral constraints which were originally introduced in \cite{Mor71,Mor72} by J.J. Moreau, motivated by problems in elastoplasticity and nonsmooth mechanics (cf., e.g., \cite{Mor74,Mor76b,Mor02}, and the monograph \cite{Mon93}). Later sweeping processes have then found applications in several diverse disciplines: in economic theory (for instance in \cite{Hen73, Cor83, FlaHirJou09}), in electrical circuits (see, e.g.,
\cite{AddAdlBroGoe07,BroThi10, AcaBonBro11, AdlHadThi14}), in crowd motion modeling (cf., e.g.,
\cite{MauVen08, MauRouSan10, MauVen11, MauRouSanVen11, MauRouSan14, DimMauSan16, CaoMorl17}), and in other fields (see, e.g., papers \cite{Thi16,NacThi20,KreMonRec23,KamPetRec24} and their references).

Sweeping processes can be described as follows. Fix a final time $T > 0$, let $\H$ be a real Hilbert space, and let $\C(t)$ be a given nonempty, closed subset of $\H$ depending on time $t \in \clint{0,T}$. One has to find a function $y : \clint{0,T} \function \H$ such that
\begin{alignat}{3}
	 & y(t) \in \C(t)             & \quad                           & \forall t \in \clint{0,T}, \label{y in C - Lip - intro}                           \\
	 & y'(t) \in -N_{\C(t)}(y(t)) & \quad                           & \text{for $\leb^{1}$-a.e. $t \in \clint{0,T}$}, \label{diff. incl. - Lip - intro} \\
	 & y(0) = y_{0},              & \label{in. cond. - Lip - intro}
\end{alignat}
$y_0$ being a prescribed point in $\C(0)$. Here $y'(t)$ is the derivative of $y$ at $t$, $\leb^1$ is the Lebesgue measure, and $N_{\C(t)}(y(t))$ is the (proximal) exterior normal cone to $\C(t)$ at $y(t)$ (all the precise definitions needed in the paper will be provided in the next section).
In \cite{Mor71} Moreau found a unique Lipschitz continuous function $y$ solving \eqref{y in C - Lip - intro}--\eqref{in. cond. - Lip - intro}, by assuming that the sets
$\C(t)$ are convex, and the mapping $t \longmapsto \C(t)$ is Lipschitz continuous in time, when the family of closed subsets of $\H$ is endowed with the Hausdorff metric
\[
	\d_\hausd(\A,\B) := \max\left\{\sup_{x \in \A} \d(x,\B),\sup_{y \in \B} \d(x,\A)\right\},
\]
with $\A$, $\B$ nonempty closed sets in $\H$.

In the celebrated paper \cite{Mor77}, the formulation \eqref{y in C - Lip - intro}--\eqref{in. cond. - Lip - intro} was then extended to the case when the mapping $t \longmapsto \C(t)$ is of bounded variation and right continuous with respect to $\d_\hausd$: it is proved that there is a unique right continuous $y \in \BV(\clint{0,T};\H)$, the space of right continuous $\H$-valued functions with bounded variation, such that there exist a positive measure $\mu$ and a $\mu$-integrable function $v : \clsxint{0,\infty} \function \H$ satisfying
\eqref{y in C - Lip - intro} and \eqref{in. cond. - Lip - intro} together with
\begin{alignat}{3}
	 & \! \D y = v \mu,          & \label{Dy = w mu - intro}                                                                               \\
	 & v(t) \in -N_{\C(t)}(y(t)) & \quad                     & \text{for $\mu$-a.e. $t \in \clint{0,T}$}, \label{diff. incl. - BV - intro}
\end{alignat}
where $\D y$ denotes distributional derivative of $y$, which is a measure since $y$ is of bounded variation.

Actually in \cite{Mor77} the moving set $\C$ is assumed to be only with \emph{bounded retraction}, rather than with bounded variation, i.e. the Hausdorff distance is replaced by the asymmetric distance
\[
	\e(\A,\B) := \sup_{x \in \A} \d(x,\B),
\]
called \emph{excess of $\A$ over $\B$}, with $\A$, $\B$ nonempty closed sets in $\H$. Indeed, as Moreau observed, ``some
regularity for the motion of the set $\C(t)$ is needed only when this set retracts, effectively sweeping the point $y(t)$'', so that
it is natural assume only ``some unilateral rating for the displacement of sets''.

After the pioneering work of Moreau, convex sweeping processes in the $BV$ framework have received a great deal of attention, but most of the papers addressed theorems involving the Hausdorff distance $\d_\hausd$ rather than the excess $\e$, probably due to the fact that the asymmetric excess introduces certain technical difficulties (see \cite{Mor74a,Mor74b}). Several developments can be found, e.g., in the monograph \cite{Mon93}, in papers
\cite{Cas73, Cas76, Cas83, CasDucVal93, BroKreSch04, KreRoc11, Thi16, DimMauSan16, KopRec16, RecSan18,Rec25}, and in the references therein. These researches can be generalized in two directions. On one hand one can deal with driving sets which are non-convex, and on the other hand one can relax the assumption that $\C(t)$ has bounded variation.

The analysis of
non-convex sweeping processes started in \cite{Val88} and, since then, has called the attention of many other authors, e.\,g., \cite{CasMon96, ColGon99, Ben00, BouThi05}. Concerning existence and uniqueness results in the non-convex setting, we mention, for instance,
\cite{Thi03, EdmThi06, CheMon07, BerVen12, SenThi14, BerVen15, AdlNacThi17, NacThi20, KreMonRec21, KreMonRec22,
	KreMonRec23, KopRec23, RecStr25}.
More precisely these papers consider the family of \emph{uniformly prox-regular} subsets of $\H$, which can be described as those closed subsets $\K$ such that there exists $r > 0$ for which the function
\[
	x \longmapsto \d_\K(x) := \inf\{\norm{z-x}{}\ :\ z \in \K\}
\]
is differentiable on the open set $U_r(\K) := \{x \in \H\ :\ 0 < d_\K(x) < r\}$. A fundamental property of uniform prox-regular sets is that any point in $U_r(\K)$ admits a unique projection in $\K$, therefore, such sets
can be regarded as a natural generalization of convex sets. Uniform prox-regular sets were introduced in \cite{Fed59} under the name \emph{positively reached sets} in the finite dimensional case. Further properties of these sets were investigated in \cite{Via83} (therein called \emph{weak convex sets}), and the notion of prox-regularity was later extended to infinite dimensional spaces in \cite{ClaSteWol95, PolRocThi00, ColMon03}.

The second direction of research about processes driven by convex sets with unbounded variation was instead addressed independently in \cite{Cas83} and in Section 19 (mostly written by A. Vladimirov) of \cite{KraPok83}.
In such works the moving constraint is required to have non-empty interior: this assures that the solution has bounded variation and that formulation \eqref{diff. incl. - BV - intro} makes sense.
For other contributions see, e.g., \cite{Mon93, CasMon95, KreLau02} and their references.

The first paper where the two directions are addressed simultaneously, namely when the moving non-convex constraint has unbounded variation, was \cite{ColMon03} where it is assumed that $\C(t)$ is continuous in time with respect to the Hausdorff distance $\d_\hausd$. In \cite{ColMon03} another geometric condition is also required, namely $\C(t)$ has uniform interior cone condition, which essentially means that cusps are not admitted on the boundary, thereby assuring the bounded variation of the solution.
A further step into this research can be found in the recent paper \cite{KreMonRec22}, where $\C(t)$ has uniform interior cone condition, but with special form $\C(t) = u(t) - \Z(w(t))$ for given time-dependent $u(t) \in \H$ and parameter $w(t)$ which are assumed to be regulated right-continuous functions, i.e., they admit the one-sided limits $u(t+)= u(t), u(t-), w(t+)=w(t), w(t-)$ at every point $t \in [0,T]$.

As we mentioned above, most papers about prox-regular sweeping processes address theorems where the rating of $\C(t)$ is prescribed with respect to the Hausdorff distance, but not to the excess. To the best of our knowledge, in the prox-regular setting, the excess is addresses only in \cite{Thi16} and in \cite{Rec25} where $\C(t)$ is assumed to be of bounded retraction: in \cite{Thi16} the uniqueness of solution is proved, while the existence is achieved in \cite{Rec25}.

The aim of the present paper is instead to study the case when $\C(t)$ move continuously, and to improve the result of \cite{ColMon03} in two directions. First, we consider a general $\C(t)$ having uniform interior cone condition, but we assume its continuity in time only with respect to the excess, i.e. we assume that the following excess-continuity modulus
\[
	\omega(\delta) := \sup\{\e(\C(s),\C(t))\ :\ 0 \le t - s < \delta\}
\]
converges to zero as $\delta \searrow 0$. Second, we remove the assumption made in \cite{ColMon03} that
$t \longmapsto \partial\C(t)$ is continuous w.r.t.\ the Hausdorff distance: we do not require any kind continuity
of $\partial \C(t)$, thereby enlarging the class of admissible sets in a meaningful way.

The paper is organized as follows. In the next section we present some preliminaries and in Section
\ref{Main res} we state the main results of the paper. In Section \ref{proofs} we perform all the necessary proofs. Section \ref{proofs} is divided in several subsections.

%%%%%%%%%%%%%%%%%%%%%%%%%%%%%%%%%%%%%%%%
%%%%%%%%%%%%%%%%%%%%%%%%%%%%%%%%%%%%%%%%

\section{Preliminaries and notations}\label{S:prelim}

In this section we recall the main definitions and tools needed in the paper. The set of integers greater than or equal to $1$ will be denoted by $\en$. For every real number $t \in \ar$, we define $\lceil t \rceil := \min\{k \in \mathbb{Z}\ :\ t \le k\}$.

\subsection{Prox-regular sets}

We recall here some notions of non-smooth analysis. We refer the reader to the monographs
\cite{RocWet98, ClaLedSteWol98, Thi23a, Thi23b}. Throughout this paper we assume that
\begin{equation}\label{H-prel}
	\begin{cases}
		\text{$\H$ is a real Hilbert space with inner product $(x,y) \longmapsto \duality{x}{y}$}, \\
		\norm{x}{} := \duality{x}{x}^{1/2}, \quad x \in \H,
	\end{cases}
\end{equation}
and we endow $\H$ with the natural metric defined by $(x,y) \longmapsto \norm{x-y}{}$, $x, y \in \H$.
If $\rho > 0$ and $x \in \H$ we set $B_\rho(x) := \{y \in \H\ :\ \norm{y-x}{} < \rho\}$ and
$\overline{B}_\rho(x) :=  \{y \in \H\ :\ \norm{y-x}{} \le \rho\}$. If $\Z \subseteq \H$, the closure, the boundary and the interior of $\Z$ will be respectively denoted by $\overline{\Z}$, $\partial \Z$, and $\Int(\Z)$. If $x \in \H$ we also set
$\d_\Z(x) := \d(x,\Z) := \inf_{s \in \Z} \norm{x-s}{}$, defining in this way a function $\d_\Z : \H \function \mathbb{R}$.

\begin{Def}
	If $\Z \subseteq \H$, $\Z \neq \void$, and $y \in \H$ then we set
	\begin{equation}
		\Proj_\Z(y) := \left\{x \in \Z\ :\ \norm{x-y}{} = \inf_{z \in \Z} \norm{z-y}{}\right\}.
	\end{equation}
\end{Def}

Statement (2.3) below can be found, e.g., in \cite[Proposition 1.3]{ClaLedSteWol98}. In fact, given
$y \in \mathcal{H}$ and $x \in \Z$, we see that $\|x-y\|^2 \leq \|z-y\|^2$ (resp. $\|x-y\|^2 < \|z-y\|^2$) for all $z \in \Z \setmeno \{x\}$ if and only if
$x \in \Proj_\Z(y)$ (resp. $\{x\} = \Proj_\Z(y)$), and writing
\[
	\|z-y\|^2 = \|z-x\|^2 - 2\langle y-x,z-x\rangle + \|x-y\|^2
\]
we then have the following

\begin{Prop}
	If $\Z \subseteq \H$, $\Z \neq \void$, and $y \in \H$, then
	\begin{equation}
		x \in \Proj_\Z(y) \ \Longleftrightarrow \
		\left[\ x \in \Z,\quad  \duality{y-x}{z-x} \le \frac{1}{2}\norm{z-x}{}^2 \ \ \forall z \in \Z\ \right] \label{ch-pr}
	\end{equation}
	and
	\begin{equation}
		\{x\} = \Proj_\Z(y) \quad \Longleftrightarrow \
		\left[\ x \in \Z,\quad \duality{y-x}{z-x} < \frac{1}{2}\norm{z-x}{}^2 \ \ \forall z \in \Z \setmeno \{x\}\ \right] \label{ch-pr-str}
	\end{equation}
\end{Prop}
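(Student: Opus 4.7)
The plan is to exploit exactly the polarization-style identity that the paper has already written out, namely
\[
	\|z-y\|^2 = \|z-x\|^2 - 2\langle y-x,z-x\rangle + \|x-y\|^2,
\]
which holds for all $x,y,z \in \H$. The proposition is essentially an equivalent reformulation of this identity on the indicator that $x$ minimizes $z \longmapsto \|z-y\|$ over $\Z$, so there is no real analytic content beyond bookkeeping.

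First I would fix $y \in \H$ and $x \in \Z$ and observe that, since $\|\cdot\|\ge 0$, the condition $x \in \Proj_\Z(y)$ is equivalent to $\|x-y\|^2 \le \|z-y\|^2$ for every $z \in \Z$. Subtracting $\|x-y\|^2$ from both sides of the displayed identity gives
\[
	\|z-y\|^2 - \|x-y\|^2 = \|z-x\|^2 - 2\duality{y-x}{z-x},
\]
so the inequality $\|x-y\|^2 \le \|z-y\|^2$ is equivalent to $2\duality{y-x}{z-x} \le \|z-x\|^2$, i.e.\ to the inequality appearing on the right-hand side of \eqref{ch-pr}. Quantifying this equivalence over all $z \in \Z$ yields the first equivalence. (For $z=x$ the inequality is trivially $0 \le 0$, so one may harmlessly include or exclude $x$ in the quantifier.)

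For the second equivalence I would argue in the same way but with strict inequalities. By definition $\{x\} = \Proj_\Z(y)$ means that $x \in \Z$ and $\|x-y\|^2 < \|z-y\|^2$ for every $z \in \Z \setmeno \{x\}$; the same algebraic identity shows this is the same as $\duality{y-x}{z-x} < \tfrac{1}{2}\|z-x\|^2$ for every such $z$, which is precisely \eqref{ch-pr-str}.

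I do not expect any genuine obstacle: both directions are contained in the single algebraic identity, and the only thing worth a small remark is that in \eqref{ch-pr-str} the restriction $z \neq x$ is necessary (otherwise one would obtain $0 < 0$), whereas in \eqref{ch-pr} it is immaterial. The resulting proof will be a short paragraph rather than a multi-step argument.
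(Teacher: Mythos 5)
Your proposal is correct and follows exactly the paper's own argument: the paper also derives both equivalences from the single identity $\|z-y\|^2 = \|z-x\|^2 - 2\duality{y-x}{z-x} + \|x-y\|^2$ together with the observation that $x \in \Proj_\Z(y)$ (resp.\ $\{x\} = \Proj_\Z(y)$) means $\|x-y\|^2 \le \|z-y\|^2$ (resp.\ $<$) for all $z \in \Z \setmeno \{x\}$. Nothing to add.
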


If $\K$ is a nonempty closed subset of $\H$ and $x \in \K$, then $N_\K(x)$ denotes the \emph{(exterior proximal) normal cone of $\K$ at $x$} which is defined by setting
\begin{equation}\label{normal cone}
	N_\K(x) := \{\lambda(y-x) \ :\ x \in \Proj_{\K}(y),\ y \in \H,\ \lambda \ge 0\}.
\end{equation}

Let us also recall the following proposition (see, e.g., \cite[Proposition 1.5]{ClaLedSteWol98}).

\begin{Prop}\label{propsigma}
	Assume that $\K$ is a closed subset of $\H$ and that $x \in \K$. We have that $u \in N_\K(x)$ if and only if there exists $\sigma \ge 0$ such that
	\begin{equation}\label{prox normal ineq}
		\duality{u}{z-x} \le \sigma \norm{z-x}{}^2 \qquad \forall z \in \K.
	\end{equation}
\end{Prop}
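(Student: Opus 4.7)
My plan is to prove each implication by a direct use of the projection characterization \eqref{ch-pr} from the preceding proposition, essentially just rescaling the test vector by a suitable positive factor.

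For the forward implication, suppose $u \in N_\K(x)$, so by definition there exist $y \in \H$ and $\lambda \ge 0$ with $x \in \Proj_\K(y)$ and $u = \lambda(y-x)$. If $\lambda = 0$, then $u = 0$ and the inequality \eqref{prox normal ineq} is satisfied trivially with $\sigma = 0$. Otherwise, the characterization \eqref{ch-pr} gives
\[
\duality{y-x}{z-x} \le \tfrac{1}{2}\norm{z-x}{}^2 \qquad \forall z \in \K,
\]
and multiplying by $\lambda \ge 0$ yields \eqref{prox normal ineq} with $\sigma := \lambda/2 \ge 0$.

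For the backward implication, suppose $u$ satisfies \eqref{prox normal ineq} for some $\sigma \ge 0$. If $u = 0$, then $u = 0\cdot(y-x) \in N_\K(x)$ for any $y \in \H$ with $\lambda = 0$. Otherwise, since the inequality \eqref{prox normal ineq} is preserved when $\sigma$ is replaced by any larger constant, we may assume without loss of generality that $\sigma > 0$. Define
\[
y := x + \frac{u}{2\sigma}, \qquad \lambda := 2\sigma > 0,
\]
so that $u = \lambda(y-x)$. Dividing \eqref{prox normal ineq} by $2\sigma$ gives
\[
\duality{y-x}{z-x} = \frac{1}{2\sigma}\duality{u}{z-x} \le \tfrac{1}{2}\norm{z-x}{}^2 \qquad \forall z \in \K,
\]
and by \eqref{ch-pr} this proves $x \in \Proj_\K(y)$. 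Therefore $u = \lambda(y-x) \in N_\K(x)$ by the definition \eqref{normal cone}.

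The argument is entirely elementary once the characterization \eqref{ch-pr} is available; there is no real obstacle. The only point requiring a small amount of attention is the degenerate case $\lambda = 0$ (resp.\ $\sigma = 0$, $u = 0$), which is handled by the trivial choice indicated above.
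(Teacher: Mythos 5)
Your proof is correct. The paper itself does not prove this proposition but simply cites \cite[Proposition 1.5]{ClaLedSteWol98}; your argument is the standard one, namely the rescaling of the projection characterization \eqref{ch-pr} (which the paper sets up immediately beforehand precisely for this purpose), and both directions, including the degenerate cases $\lambda=0$ and $u=0$, are handled correctly.
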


Now we recall the notion of \emph{uniformly prox-regular set}, introduced in
\cite[Section 4, Theorem 4.1-(d)]{ClaSteWol95}) under the name of \emph{proximal smooth set}, and generalized in \cite[Definition 1.1, Definition 2.4, Theorem 4.1]{PolRocThi00}.

\begin{Def}
	If $\K$ is a nonempty closed subset of $\H$ and if $r \in \opint{0,\infty}$, then we say that $\K$ is
	\emph{$r$-prox-regular} if for every $y$ the following implication holds true:
	\begin{equation}
		0 < \d_{\K}(y) < r
		\ \Longrightarrow\
		\begin{cases}
			\Proj_\K(y) \neq \void, \\
			x \in \Proj_\K\left(x+r\dfrac{y-x}{\norm{y-x}{}}\right) \qquad \forall x \in \Proj_\K(y).
		\end{cases}
	\end{equation}
	The family of $r$-prox-regular subsets of $\H$ will be denoted by $\Conv_r(\H)$. We will also indicate by
	$\Conv_0(\H)$ the family of nonempty closed subsets of $\H$, and by $\Conv_\infty(\H)$ the family of nonempty closed convex subsets of $\H$.
\end{Def}

It is useful to take into account the following easy property.

\begin{Prop}\label{1proj in seg}
	Let $\K$ be a closed subset of $\H$. If $y \in \H \setmeno \K$ and $x \in \Proj_\K(y)$ then $\Proj_\K(x + t(y-x)) = \{x\}$ for every $t \in \opint{0,1}$. Hence if $\K$ is also $r$-prox-regular for some $r > 0$ it follows that $\Proj_\K(u)$ is a singleton for every
	$u \in \H$ such that $\d_{\K}(u) < r$.
\end{Prop}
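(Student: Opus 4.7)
The plan is to deduce the first statement directly from the sharp projection characterization \eqref{ch-pr-str}, and then to deduce the second statement from the first.

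\textbf{First statement.} Let $y \in \H \setminus \K$, $x \in \Proj_\K(y)$, $t \in \opint{0,1}$, and set $u := x + t(y-x)$. Since $x \in \Proj_\K(y)$, the implication \eqref{ch-pr} gives
\[
\duality{y-x}{z-x} \le \tfrac{1}{2}\norm{z-x}{}^2 \qquad \forall z \in \K.
\]
Multiplying by $t \in \opint{0,1}$, and noting that $u-x = t(y-x)$, for every $z \in \K \setmeno \{x\}$ one gets
\[
\duality{u-x}{z-x} = t\duality{y-x}{z-x} \le \tfrac{t}{2}\norm{z-x}{}^2 < \tfrac{1}{2}\norm{z-x}{}^2,
\]
where the strict inequality uses $t<1$ and $\norm{z-x}{}^2>0$. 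Since $x \in \K$, the characterization \eqref{ch-pr-str} yields $\Proj_\K(u)=\{x\}$.

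\textbf{Second statement.} Let $u \in \H$ with $\d_\K(u)<r$. If $u \in \K$ then $\Proj_\K(u)=\{u\}$ trivially, so assume $0<\d_\K(u)<r$. By $r$-prox-regularity, $\Proj_\K(u)\neq\void$. Pick any $x \in \Proj_\K(u)$, so that $\norm{u-x}{}=\d_\K(u)\in\opint{0,r}$, and set
\[
y := x + r\,\frac{u-x}{\norm{u-x}{}}, \qquad t := \frac{\norm{u-x}{}}{r} \in \opint{0,1}.
\]
A direct computation gives $x + t(y-x)=u$. By the definition of $r$-prox-regularity applied to $u$ and $x$, we have $x \in \Proj_\K(y)$; since $y \notin \K$ (indeed $\d_\K(y)=r$, or more simply $\norm{y-x}{}=r>\d_\K(u)=\norm{u-x}{}$ would otherwise contradict minimality), the first part of the proposition applies with $y$ and $x$, yielding $\Proj_\K(u) = \Proj_\K(x+t(y-x)) = \{x\}$. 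Since this holds for \emph{every} $x \in \Proj_\K(u)$, the projection is indeed a singleton.

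\textbf{Main obstacle.} The argument is essentially algebraic once \eqref{ch-pr}--\eqref{ch-pr-str} are available; the only subtle point is verifying that the auxiliary point $y$ built from the prox-regularity condition lies outside $\K$ (so that the first part is applicable) and that $u$ sits on the open segment from $x$ to $y$. Both facts follow immediately from $\norm{u-x}{}=\d_\K(u) \in \opint{0,r}=\opint{0,\norm{y-x}{}}$.
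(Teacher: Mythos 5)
Your proof is correct and follows essentially the same route as the paper: the first claim via the strict characterization \eqref{ch-pr-str} after scaling \eqref{ch-pr} by $t<1$, and the second by placing $u$ on the open segment joining $x$ to the auxiliary point $y$ supplied by the definition of $r$-prox-regularity. You are in fact slightly more careful than the paper, which writes the segment parameter as $t_u=r/\norm{u-x}{}$ (a typo for $\norm{u-x}{}/r$) and does not explicitly verify that $y\notin\K$.
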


\begin{proof}
	If $t \in \opint{0,1}$ then, thanks to \eqref{ch-pr}, for every $z \in \K \setmeno \{x\}$ we have that
	\[
		\duality{x + t(y-x) - x}{z - x} = t\duality{y-x}{z-x} \le \frac{t}{2}\norm{z-x}{}^2 < \frac{1}{2}\norm{z-x}{}^2,
	\]
	therefore $\Proj_\K(x + t(y-x)) = \{x\}$ by virtue of \eqref{ch-pr-str} and the first statement is proved. If $\K$ is also $r$-prox-regular and $\d_{\K}(u) < r$, pick $x \in \Proj_\K(u)$ and let $y = x + r(u-x)/\norm{u-x}{}$. Then
	$x \in \Proj_{K}(y)$, $u = x + t_u(y-x)$ with $t_u = r/\norm{u-x}{} \in \opint{0,1}$, so that $\{x\} = \Proj_\K(u)$.
\end{proof}

\begin{Def}
	If $r \in \opint{0,\infty}$ and if $\K \subseteq \H$ is $r$-prox-regular, then we can define the function
	$\P_\K : \{y \in \H\ :\ \d_\K(y) < r\} \function \K$ by setting $\P_\K(y) := x$ where $\{x\} = \Proj_\K(y)$ for every
	$y \in \H$ such that $\d_\K(y) < r$.
\end{Def}

The following characterizations of prox-regularity are very useful. The proofs can be found in
\cite[Theorem 4.1]{PolRocThi00} and in \cite[Theorem 16]{ColThi10}.

\begin{Thm}\label{charact proxreg}
	Let $\K$ be a nonempty closed subset of $\H$ and let $r \in \opint{0,\infty}$. The following
	statements are equivalent.
	\begin{itemize}
		\item[(i)] $\K$ is $r$-prox-regular.
		\item[(ii)] $\d_\K$ is differentiable in $\{y \in \H\ :\ 0 < \d_\K(y) < r\}$.
		\item[(iii)] For every $x \in \K$ and $n \in N_\K(x)$ we have
			\[
				\duality{n}{z-x} \le \frac{\norm{n}{}}{2r}\norm{z-x}{}^2 \qquad \forall z \in \K.
			\]
	\end{itemize}
\end{Thm}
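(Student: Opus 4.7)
The plan is to establish (i) $\iff$ (iii) as an essentially algebraic consequence of the projection characterization \eqref{ch-pr}, and then to obtain (i) $\iff$ (ii) through the standard gradient formula for $\d_\K$ on the tube $\{0 < \d_\K < r\}$.

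For (i) $\implies$ (iii), fix $x \in \K$ and $n \in N_\K(x)$; the case $n = 0$ is trivial, so write $n = \lambda(y - x)$ with $\lambda > 0$, $y \in \H$, and $x \in \Proj_\K(y)$. Setting $y' := x + r(y - x)/\norm{y - x}{}$, the $r$-prox-regularity of $\K$ gives $x \in \Proj_\K(y')$, and \eqref{ch-pr} applied at $y'$ yields $\langle r(y - x)/\norm{y - x}{},\, z - x\rangle \le \frac{1}{2}\norm{z - x}{}^2$ for all $z \in \K$; multiplying by $\norm{n}{}/r = \lambda\norm{y - x}{}/r$ then produces (iii). For the converse (iii) $\implies$ (i), take $y$ with $0 < \d_\K(y) < r$: uniqueness of $\Proj_\K(y)$ follows by applying (iii) at any two candidate projections $x_1, x_2$ (each admitting $y - x_i \in N_\K(x_i)$ of norm $\d_\K(y)$) and summing the two resulting inequalities, which yields $\norm{x_1 - x_2}{}^2 \le (\d_\K(y)/r)\norm{x_1 - x_2}{}^2$ and hence $x_1 = x_2$; for any such $x$, applying (iii) to the normal $y - x$ and rescaling gives \eqref{ch-pr} at the point $x + r(y - x)/\norm{y - x}{}$, which is exactly the prox-regularity identity.

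For (i) $\iff$ (ii), if (i) holds then Proposition 2.3 supplies a single-valued projection $\P_\K$ on the open tube $\{0 < \d_\K < r\}$, and the gradient formula $\nabla \d_\K(y) = (y - \P_\K(y))/\d_\K(y)$ follows by comparing $\d_\K(y + h) \le \norm{y + h - \P_\K(y)}{}$ from above with a lower bound coming from the quadratic inequality already obtained in the step (i) $\implies$ (iii). Conversely, assuming (ii), differentiability of $\d_\K$ at $y$ produces, via the chain rule applied to $\d_\K^2/2$ together with the $1$-Lipschitz character of $\d_\K$, a unique point $\P_\K(y) = y - \d_\K(y)\nabla \d_\K(y) \in \K$ realizing the distance $\d_\K(y)$; Proposition 2.3 combined with the persistence of differentiability of $\d_\K$ along the outward ray through $\P_\K(y)$ and $y$ up to distance $r$ then encodes the defining prox-regularity identity.

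The main obstacle I expect lies in the existence half of the projection for the direction (iii) $\implies$ (i). In a general infinite-dimensional Hilbert space, the hypometric inequality (iii) delivers uniqueness of the projection almost for free, but upgrading a minimizing sequence $(x_n) \subseteq \K$ for $\d(y,\K)$ to a Cauchy sequence requires quantifying how close the almost-normal vectors $y - x_n$ are to satisfying the proximal inequality \eqref{ch-pr} and deducing a Cauchy estimate from a two-point version of (iii) applied along the sequence. Every other step reduces to algebraic manipulation of \eqref{ch-pr}, the gradient calculus for distance functions, and the stability of projections along inward rays from Proposition 2.3.
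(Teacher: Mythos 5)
The paper does not actually prove Theorem \ref{charact proxreg}: it quotes it from the literature, attributing the proof to \cite[Theorem 4.1]{PolRocThi00} and \cite[Theorem 16]{ColThi10}. So the comparison is against those standard proofs, and your overall architecture --- (i)$\Leftrightarrow$(iii) by algebra on \eqref{ch-pr}, (i)$\Leftrightarrow$(ii) through the gradient formula $\nabla\d_\K(y)=(y-\P_\K(y))/\d_\K(y)$ --- coincides with theirs. The uniqueness half of (iii)$\Rightarrow$(i) (summing the two hypomonotonicity inequalities to get $\norm{x_1-x_2}{}^2\le(\d_\K(y)/r)\norm{x_1-x_2}{}^2$) and the rescaling of (iii) into \eqref{ch-pr} at the point $x+r(y-x)/\norm{y-x}{}$ are both correct. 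In (i)$\Rightarrow$(iii) there is a small unaddressed point: the $y$ furnished by the definition \eqref{normal cone} of the normal cone need not satisfy $\d_\K(y)<r$, so the definition of $r$-prox-regularity cannot be applied to it directly; you must first slide $y$ toward $x$ along the segment using Proposition \ref{1proj in seg} (which preserves the unit direction and places the point inside the tube $\{0<\d_\K<r\}$) and only then invoke the definition. That is a one-line repair.

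The genuine gap is the one you flag yourself: the existence of $\Proj_\K(y)$ for $0<\d_\K(y)<r$ in the implication (iii)$\Rightarrow$(i). Condition (iii) constrains only vectors that already are proximal normals, whereas along a minimizing sequence $(x_k)\subseteq\K$ for $\d(y,\K)$ the vectors $y-x_k$ need not be normals of any kind, so there is no ``two-point version of (iii)'' available to run a Cauchy estimate directly. The standard resolution --- and this is the technical core of \cite[Theorem 4.1]{PolRocThi00} --- is to upgrade the minimizing sequence via Ekeland's variational principle: for each $\eps>0$ one produces $x_\eps\in\K$ with $\norm{y-x_\eps}{}\le\d_\K(y)+\eps$ which exactly minimizes $z\mapsto\norm{y-z}{}+\eps\norm{z-x_\eps}{}$ over $\K$, one shows that $y-x_\eps$ then satisfies the inequality in (iii) up to an error of order $\eps$ (i.e.\ it is an approximate normal), and only at that point does the two-point hypomonotonicity yield that the net $(x_\eps)$ is Cauchy. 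Without this variational-principle step the implication (iii)$\Rightarrow$(i) is not established in an infinite-dimensional $\H$; the same existence issue is implicitly used at the start of your (ii)$\Rightarrow$(i) argument, although there the Fr\'echet differentiability of $\d_\K$ at $y$ does force minimizing sequences to align with $\nabla\d_\K(y)$ and hence to converge, so that branch can be completed as you sketch it. Since the paper treats the whole theorem as a citation, the cleanest course is either to supply the Ekeland step explicitly or to cite \cite{PolRocThi00} for it.
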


%%%%%%%%%%%%%%%%%%%%%%%%%%%%%%%%%%%%%%%%

\subsection{Functions of bounded variation and differential measures}\label{differential measures}

Let $I$ be an interval of $\ar$. The set of $\H$-valued continuous functions defined on $I$ is denoted by
$\Czero(I;\H)$. If $f \in \Czero(I;\H)$ then $\norm{f}{\infty} := \sup\{\norm{f(t)}{}\ :\ t \in I\}$ denotes it supremum norm.

\begin{Def}
	Given an interval $I \subseteq \ar$, a function $f : I \function \H$, and a subinterval $J \subseteq I$, the \emph{variation of $f$ on $J$} is defined by
	\begin{equation}\notag
		\pV(f,J) :=
		\sup\left\{
		\sum_{j=1}^{m} \norm{(f(t_{j-1})- f(t_{j})}{}\ :\ m \in \en,\ t_{j} \in J\ \forall j,\ t_{0} < \cdots < t_{m}
		\right\}.
	\end{equation}
	If $\pV(f,I) < \infty$ we say that \emph{$f$ is of bounded variation on $I$} and we set
	\[
		\BV(I;\H) := \{f : I \function \H\ :\ \pV(f,I) < \infty\}.
	\]
\end{Def}

It is well known that every $f \in \BV(I;\H)$ admits one-sided limits $f(t-), f(t+)$ at every point $t \in I$, with the convention that $f(\inf I-) := f(\inf I)$ if $\inf I \in I$, and $f(\sup I+) := f(\sup I)$ if $\sup I \in I$.
The family of Borel sets in $I$ is denoted by $\borel(I)$. If
$\mu : \borel(I) \function \clint{0,\infty}$ is a measure,
then the space of $\H$-valued functions which are integrable with respect to $\mu$ will be denoted by
$\L^1(I, \mu; \H)$ or simply by $\L^1(\mu; \H)$. For the theory of integration of vector valued functions we refer, e.g., to \cite[Chapter VI]{Lan93}. When $\mu = \leb^1$, where $ \leb^1$ is the one dimensional Lebesgue measure, we write $\L^1(I; \H) := \L^1(I,\mu; \H)$.

We recall that a \emph{$\H$-valued measure on $I$} is a map $\nu : \borel(I) \function \H$ such that
$\nu(\bigcup_{n=1}^{\infty} B_{n})$ $=$ $\sum_{n = 1}^{\infty} \nu(B_{n})$ for every sequence $(B_{n})$ of mutually disjoint sets in $\borel(I)$. The \emph{total variation of $\nu$} is the positive measure
$\vartot{\nu} : \borel(I) \function \clint{0,\infty}$ defined by
\begin{align}\label{tot var measure}
	\vartot{\nu}(B)
	:= \sup\left\{\sum_{n = 1}^{\infty} \norm{\nu(B_{n})}{}\ :\
	B = \bigcup_{n=1}^{\infty} B_{n},\ B_{n} \in \borel(I),\
	B_{h} \cap B_{k} = \varnothing \text{ if } h \neq k\right\}. \notag
\end{align}
The vector measure $\nu$ is said to be \emph{with bounded variation} if $\vartot{\nu}(I) < \infty$. In this case the equality
$\norm{\nu}{} := \vartot{\nu}(I)$ defines a complete norm on the space of measures with bounded variation (see, e.g.
\cite[Chapter I, Section  3]{Din67}).

If $\mu : \borel(I) \function \clint{0,\infty}$ is a positive bounded Borel measure and if $g \in \L^1(I,\mu;\H)$, then
$g\mu : \borel(I) \function \H$ denotes the vector measure defined by
\begin{equation}\label{gmu}
	g\mu(B) := \int_B g\de \mu, \qquad B \in \borel(I). \notag
\end{equation}

Assume that $\nu : \borel(I) \function \H$ is a vector measure with bounded variation and $f : I \function \H$ and
$\phi : I \function \mathbb{R}$ are two \emph{step maps with respect to $\nu$}, i.e. there exist
$f_{1}, \ldots, f_{m} \in \H$, $\phi_{1}, \ldots, \phi_{m} \in \H$ and $A_{1}, \ldots, A_{m} \in \borel(I)$ mutually disjoint such that $\vartot{\nu}(A_{j}) < \infty$ for every $j$ and $f = \sum_{j=1}^{m} \indicator_{A_{j}} f_{j}$,
$\phi = \sum_{j=1}^{m} \indicator_{A_{j}} \phi_{j}.$ Here $\indicator_{S} $ is the characteristic function of a set $S$, i.e. $\indicator_{S}(x) := 1$ if $x \in S$ and $\indicator_{S}(x) := 0$ if $x \not\in S$. For such step maps we define
$\int_{I} \duality{f}{\de\nu} := \sum_{j=1}^{m} \duality{f_{j}}{\nu(A_{j})} \in \mathbb{R}$ and
$\int_{I} \phi \de \nu := \sum_{j=1}^{m} \phi_{j} \nu(A_{j}) \in \H$.

If $\Step(\vartot{\nu};\H)$ (resp. $\Step(\vartot{\nu})$) is the set of $\H$-valued (resp.\ real valued) step maps with respect to $\nu$, then the maps
$\Step(\vartot{\nu};\H)$ $\function$ $\H : f \longmapsto \int_{I} \duality{f}{\de\nu}$ and
$\Step(\vartot{\nu})$ $\function$ $\H : \phi \longmapsto \int_{I} \phi \de \nu$
are linear and continuous when $\Step(\vartot{\nu};\H)$ and $\Step(\vartot{\nu})$ are endowed with the
$\L^{1}$-seminorms $\norm{f}{\L^{1}(\vartot{\nu};\H)} := \int_I \norm{f}{} \de \vartot{\nu}$ and
$\norm{\phi}{\L^{1}(\vartot{\nu})} := \int_I |\phi| \de \vartot{\nu}$. Therefore they admit unique continuous extensions
$\mathsf{I}_{\nu} : \L^{1}(\vartot{\nu};\H) \function \mathbb{R}$ and
$\mathsf{J}_{\nu} : \L^{1}(\vartot{\nu}) \function \H$,
and we set
\[
	\int_{I} \duality{f}{\de \nu} := \mathsf{I}_{\nu}(f), \quad
	\int_{I} \phi\, \de\nu := \mathsf{J}_{\nu}(\phi),
	\qquad f \in \L^{1}(\vartot{\nu};\H),\quad \phi \in \L^{1}(\vartot{\nu}).
\]
Using step functions it is easy to check that
$|\int_{I} \duality{f}{\de \nu}| \le \int_{I} \norm{f(s)}{}\de \vartot{\nu}(s)$ for $f \in \L^{1}(\vartot{\nu};\H)$.
The following results (cf., e.g., \cite[Section III.17.2-3, p. 358-362]{Din67}) provide the connection between functions with bounded variation and vector measures which will be implicitly used in this paper.

\begin{Thm}\label{existence of Stietjes measure}
	For every $f \in \BV(I;\H)$ there exists a unique vector measure of bounded variation
	$\nu_{f} : \borel(I) \function \H$ such that
	\begin{align}
		\nu_{f}(\opint{c,d}) = f(d-) - f(c+), \qquad \nu_{f}(\clint{c,d}) = f(d+) - f(c-), \notag \\
		\nu_{f}(\clsxint{c,d}) = f(d-) - f(c-), \qquad \nu_{f}(\cldxint{c,d}) = f(d+) - f(c+). \notag
	\end{align}
	whenever $\inf I \le c < d \le \sup I$ and the left hand side of each equality makes sense.

	\noindent Conversely, if $\nu : \borel(I) \function \H$ is a vector measure with bounded variation, and if
	$f_{\nu} : I \function \H$ is defined by $f_{\nu}(t) := \nu(\clsxint{\inf I,t} \cap I)$, then $f_{\nu} \in \BV(I;\H)$ and
	$\nu_{f_{\nu}} = \nu$.
\end{Thm}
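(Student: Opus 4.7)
The plan is to proceed via Carathéodory's extension theorem in a Banach space, first defining $\nu_f$ on an algebra of elementary sets generated by subintervals and then extending uniquely to $\borel(I)$. To control the vector-valued construction I would introduce the scalar nondecreasing variation function $V_f : I \function \clint{0,\infty}$, $V_f(t) := \pV(f,\clsxint{\inf I, t} \cap I)$, and the associated finite positive Borel measure $\mu_f$ on $I$ given by the classical scalar Stieltjes construction, so that $\mu_f(\opint{c,d}) = V_f(d-) - V_f(c+)$ and similarly for the other three types of intervals.

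Let $\A \subseteq \parti(I)$ be the algebra generated by the subintervals of $I$. Every $J \in \A$ is a finite disjoint union of intervals of one of the four types appearing in the statement, and on each such interval $\nu_f$ is prescribed by the given formulas; I would extend $\nu_f$ to $\A$ by finite additivity, after checking well-posedness, and establish the fundamental estimate
\[
\norm{\nu_f(J)}{} \le \mu_f(J) \qquad \forall J \in \A,
\]
which for a single interval reduces to the triangle inequality applied to finer and finer partitions whose endpoints approach the one-sided limits of $f$.

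This control, coupled with $\sigma$-additivity of $\mu_f$, yields countable additivity of $\nu_f$ on $\A$: if $(J_n) \subseteq \A$ are pairwise disjoint with $J := \bigcup_n J_n \in \A$, then
\[
\lnorm{\nu_f(J) - \sum_{n=1}^N \nu_f(J_n)}{} = \lnorm{\nu_f\bigl(J \setmeno {\textstyle\bigcup_{n=1}^N} J_n\bigr)}{} \le \mu_f\bigl(J \setmeno {\textstyle\bigcup_{n=1}^N} J_n\bigr) \function 0
\]
as $N \function \infty$. Completeness of $\H$ and Carathéodory's theorem then produce a unique $\sigma$-additive vector measure $\nu_f : \borel(I) \function \H$ of bounded variation with $\vartot{\nu_f}(I) \le \pV(f,I)$; uniqueness on $\borel(I)$ is automatic since any two such extensions agree on the generating family of intervals.

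For the converse direction, given $\nu$ of bounded variation, set $f_\nu(t) := \nu(\clsxint{\inf I,t} \cap I)$. For any subdivision $t_0 < \cdots < t_m$ the increments $f_\nu(t_j) - f_\nu(t_{j-1}) = \nu(\clsxint{t_{j-1},t_j} \cap I)$ correspond to mutually disjoint Borel sets, whence the triangle inequality yields $\pV(f_\nu,I) \le \vartot{\nu}(I) < \infty$. Computing the one-sided limits of $f_\nu$ via continuity of $\nu$ along monotone sequences of sets, I would verify that $\nu_{f_\nu}$ agrees with $\nu$ on each of the four types of intervals, and uniqueness from the first part forces $\nu_{f_\nu} = \nu$. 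The crux of the whole argument — and the only place where the bounded-variation hypothesis is genuinely used — is the verification of countable additivity of $\nu_f$ on $\A$ through the scalar majorant $\mu_f$; the remaining steps are formal combinatorics on intervals or a standard application of Carathéodory's theorem in a Banach space.
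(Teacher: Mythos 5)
The paper does not actually prove this theorem: it is recalled as a classical fact with a pointer to \cite[Section III.17.2--3]{Din67}, so there is no in-paper argument to compare against. Your proposal is a correct and complete rendition of the standard construction and is essentially the route taken in that reference: finitely additive definition on the interval algebra, the domination $\norm{\nu_f(J)}{}\le\mu_f(J)$ by the scalar Stieltjes measure of the variation function, $\sigma$-additivity on the algebra via continuity from above of the finite measure $\mu_f$, and the vector-valued Carath\'eodory extension (this is where completeness of $\H$ and the domination are genuinely used, and uniqueness follows by composing with functionals in $\H^*$ and the scalar $\pi$--$\lambda$ argument). The only steps that must be spelled out in a full write-up --- and which you correctly flag --- are the well-posedness of the extension to the algebra (consistency of the four interval formulas under splitting an interval) and, in the converse, the identification $f_\nu(t-)=\nu(\clsxint{\inf I,t}\cap I)$ and $f_\nu(t+)=\nu(\clint{\inf I,t}\cap I)$ via monotone continuity of $\nu$, after which agreement with $\nu$ on all intervals plus the uniqueness of the first part gives $\nu_{f_\nu}=\nu$.
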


\begin{Prop}
	Let $f  \in \BV(I;\H)$, let $g : I \function \H$ be defined by $g(t) := f(t-)$, for $t \in \Int(I)$, and by $g(t) := f(t)$, if
	$t \in \partial I$, and let $V_{g} : I \function \mathbb{R}$ be defined by $V_{g}(t) := \pV(g, \clint{\inf I,t} \cap I)$. Then
	$\nu_{g} = \nu_{f}$ and $\vartot{\nu_{f}}(I) = \nu_{V_{g}}(I) = \pV(g,I)$.
\end{Prop}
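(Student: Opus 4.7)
\emph{Plan.} The statement collects three equalities—$\nu_g = \nu_f$, $\nu_{V_g}(I) = \pV(g,I)$, and $\vartot{\nu_f}(I) = \pV(g,I)$—and I will establish them in order.

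\emph{The identity $\nu_g = \nu_f$.} At each boundary point of $I$ lying in $I$, $g$ equals $f$ by definition. At an interior point $t$, the left-limit function $s \mapsto f(s-)$ is itself left-continuous with right-limit $f(t+)$, so $g(t-)=f(t-)=g(t)$ and $g(t+)=f(t+)$. Consequently, Theorem~\ref{existence of Stietjes measure} applied to $f$ and to $g$ yields $\nu_f(J)=\nu_g(J)$ on every subinterval $J\subseteq I$; since such intervals generate $\borel(I)$, uniqueness of measure extension gives $\nu_f=\nu_g$ on $\borel(I)$.

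\emph{The identity $\nu_{V_g}(I)=\pV(g,I)$.} The function $V_g$ is nondecreasing with $V_g(\inf I+)=0$ (or $V_g(\inf I)=0$ if $\inf I\in I$) and $V_g(\sup I-)=\pV(g,I)$ (or $V_g(\sup I)=\pV(g,I)$ if $\sup I\in I$). Applying Theorem~\ref{existence of Stietjes measure} to the real-valued BV function $V_g$ gives $\nu_{V_g}(I)=\pV(g,I)$ in every case.

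\emph{The identity $\vartot{\nu_f}(I)=\pV(g,I)$.} The lower bound $\pV(g,I)\le\vartot{\nu_f}(I)$ is direct: for any subdivision $t_0<\cdots<t_m$ of points of $I$, the half-open Borel sets $A_j:=\clsxint{t_{j-1},t_j}$ are pairwise disjoint, and Theorem~\ref{existence of Stietjes measure} together with $g(t)=f(t-)$ and the endpoint conventions $f(\inf I-)=f(\inf I)$, $f(\sup I+)=f(\sup I)$ gives $\nu_f(A_j)=g(t_j)-g(t_{j-1})$, whence $\sum_j\|g(t_j)-g(t_{j-1})\|=\sum_j\|\nu_f(A_j)\|\le\vartot{\nu_f}(I)$; taking the supremum yields $\pV(g,I)\le\vartot{\nu_f}(I)$ and in particular shows $\pV(g,I)<\infty$. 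For the reverse bound, the strategy is to prove the pointwise inequality $\|\nu_f(B)\|\le\nu_{V_g}(B)$ for every $B\in\borel(I)$, from which $\vartot{\nu_f}(B)\le\nu_{V_g}(B)$ follows because $\nu_{V_g}$ is a positive Borel measure majorising $\|\nu_f\|$. On a half-open interval $\clsxint{c,d}$ this reduces to $\|g(d)-g(c)\|\le V_g(d-)-V_g(c-)$, immediate from the definition of $V_g$ and its left-continuity (inherited from $g$); the inequality then extends to finite disjoint unions of intervals by the triangle inequality in $\H$, to open sets by $\sigma$-additivity of $\nu_f$, and finally to arbitrary Borel sets by outer regularity of the finite measure $\nu_{V_g}$.

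\emph{Main obstacle.} The technical heart is the reverse bound in the third step: $\vartot{\nu_f}$ is defined via arbitrary countable Borel partitions while $\pV(g,I)$ is a supremum over subdivisions, and bridging the two requires extending the interval-level comparison to every Borel set. The approach above uses the pointwise estimate $\|\nu_f(B)\|\le\nu_{V_g}(B)$ pushed from intervals through open sets to general Borel sets by standard outer-regularity arguments. An equivalent route, traceable to \cite{Din67}, is to invoke the fact that for a vector measure of bounded variation the total variation can be computed by restricting the supremum in its definition to the algebra generated by half-open intervals, which then matches $\pV(g,I)$ directly through the formulae of Theorem~\ref{existence of Stietjes measure}.
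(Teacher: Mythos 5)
The paper itself does not prove this Proposition: it is stated as a classical fact with a pointer to \cite[Section III.17.2-3]{Din67}, so there is no in-paper argument to compare against. Your proof follows the standard route of that reference and is essentially correct in structure: identifying the one-sided limits of $g$ with those of $f$ and invoking the uniqueness part of Theorem~\ref{existence of Stietjes measure} gives $\nu_g=\nu_f$; evaluating $\nu_{V_g}$ on $I$ via the interval formulas gives $\pV(g,I)$; and the two-sided comparison between $\vartot{\nu_f}$ and $\nu_{V_g}$ closes the chain of equalities.

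Two small points need patching. First, in the lower bound $\pV(g,I)\le\vartot{\nu_f}(I)$, your formula $\nu_f(\clsxint{t_{j-1},t_j})=g(t_j)-g(t_{j-1})$ fails when $t_j=\sup I\in I$ and $f$ jumps there: the theorem gives $\nu_f(\clsxint{t_{j-1},t_j})=f(t_j-)-f(t_{j-1}-)$, whereas $g(\sup I)=f(\sup I)=f(\sup I+)$, and the convention $f(\sup I+)=f(\sup I)$ that you invoke concerns the right limit and is of no help for a half-open interval. The fix is to take the last set of the partition closed, $A_m=\clint{t_{m-1},t_m}$, precisely when $t_m=\sup I$, for which $\nu_f(A_m)=f(t_m+)-f(t_{m-1}-)=g(t_m)-g(t_{m-1})$. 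Second, in passing from open sets to arbitrary Borel sets in the estimate $\norm{\nu_f(B)}{}\le\nu_{V_g}(B)$, outer regularity of $\nu_{V_g}$ alone does not suffice: you must also control $\nu_f(U)-\nu_f(B)$, which requires the (equally standard) regularity of the finite positive measure $\vartot{\nu_f}$, choosing $U\supseteq B$ open with both $\nu_{V_g}(U\setminus B)$ and $\vartot{\nu_f}(U\setminus B)$ small. With these two repairs the argument is complete; alternatively, as you note at the end, one can quote directly from \cite{Din67} that the total variation of a countably additive vector measure may be computed on the generating interval algebra, which short-circuits the Borel extension entirely.
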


The measure $\nu_{f}$ is called the \emph{Lebesgue-Stieltjes measure} or \emph{differential measure} of $f$. Let us see the connection between  the differential measure and the distributional derivative. If $f \in \BV(I;\H)$ and if
$\overline{f}: \mathbb{R} \function \H$ is defined by
\begin{equation}\label{extension to R}
	\overline{f}(t) :=
	\begin{cases}
		f(t)      & \text{if $t \in I$},                                              \\
		f(\inf I) & \text{if $\inf I \in \mathbb{R}$, $t \not\in I$, $t \le \inf I$}, \\
		f(\sup I) & \text{if $\sup I \in \mathbb{R}$, $t \not\in I$, $t \ge \sup I,$}
	\end{cases}
\end{equation}
then, as in the scalar case, it turns out (cf. \cite[Section 2]{Rec11a}) that $\nu_{f}(B) = \D \overline{f}(B)$ for every
$B \in \borel(\mathbb{R})$, where $\D\overline{f}$ is the distributional derivative of $\overline{f}$, i.e.
\[
	- \int_\mathbb{R} \varphi'(t) \overline{f}(t) \de t = \int_{\mathbb{R}} \varphi \de \D \overline{f},
	\qquad \forall \varphi \in \Czero_{c}^{1}(\mathbb{R};\mathbb{R}),
\]
where $\Czero_{c}^{1}(\mathbb{R};\mathbb{R})$ is the space of continuously differentiable functions on
$\mathbb{R}$ with compact support. Observe that $\D \overline{f}$ is concentrated on $I$:
$\D \overline{f}(B) = \nu_f(B \cap I)$ for every $B \in \borel(I)$, hence in the remainder of the paper, if
$f \in \BV(I,\H)$ then we will simply write
\begin{equation}
	\D f := \D\overline{f} = \nu_f,
\end{equation}
and from the previous discussion it follows that
\begin{equation}\label{D-TV-pV}
	\norm{\D f}{} = \vartot{\D f}(I) = \norm{\nu_f}{}
\end{equation}
and
\begin{equation}\label{D-TV-pV-2}
	\vartot{\D f}(I) =  \pV(f,I) \quad \text{if $f(t) \in \{(1-\lambda)f(t-) + \lambda f(t+)\ :\ \lambda \in \clint{0,1}\} \ \forall t \in I$}.
\end{equation}

%%%%%%%%%%%%%%%%%%%
%\subsection{The excess}
%\mage{forse eccessiva una sottosezione per l'eccesso}

%\begin{Def}[\texttt{avr\`a senso??}]
%	Let $I$ be a subset of $\ar$ and let $t_0 \in I$ be a cluster point of $I$. We say that \emph{$\C : I \longrightarrow \Conv_r(\H)$ is $\e$-continuous at $t_0$} if the following condition is satisfied:
%	\begin{equation}
%		\forall \eps > 0\ \exists \delta > 0 \ :\
%		\left[t \in \clint{0,T},\ |t_0 - t| < \delta \ \Longrightarrow
%			\begin{cases}
%				\e(\C(t_0),\C(t)) < \eps & \text{if $t_0 \le t$}, \\
%				\e(\C(t),\C(t_0)) < \eps & \text{if $t \le t_0$}.
%			\end{cases}
%			\right]
%	\end{equation}
%	We say that \emph{$\C : I \longrightarrow \Conv_r(\H)$ is $\e$-continuous} if it is continuous at every point of $I$.
%\end{Def}
%
%\mage{devo ricordarmi di fare delle prove per vedere cosa implicano le condizioni $t \longmapsto \e(\C(t_0),\C(t))$ e
%	$t \longmapsto \e(\C(t),\C(t_0))$ continue in $t_0$}

%%%%%%%%%%%%%%%%%%%%%%%%%%%%%%%%%%%%%%%%
%%%%%%%%%%%%%%%%%%%%%%%%%%%%%%%%%%%%%%%%

\section{Main results}\label{Main res}

In this section we state the main results of the paper. Let us start by providing the rigorous formulation of the problem we are going to solve.

\begin{Pb}\label{Pb}
	Assume that \eqref{H-prel} holds. Given $T > 0$, $\C$, and $y_{0} \in \C(0)$, one has to find a function $y \in  \BV(\clint{0,T};\H)$ such that there exist a measure $\mu : \borel(\clint{0,T}) \function \clsxint{0,\infty}$ and a function $v \in \L^{1}(\mu;\H)$ satisfying
	\begin{alignat}{3}
		 & y(t) \in \C(t)                 & \qquad & \forall t \in \clint{0,T}, \label{sp-cstr}                 \\
		 & \!\D y = v \mu, \label{Dy = w}                                                                       \\
		 & v(t) \in -N_{\C(t)}(y(t))      & \qquad & \text{for $\mu$-a.e. $t \in \clint{0,T}$}, \label{BV-d.i.} \\
		 & y(0) = y_{0}. \label{sp-i.c.}
	\end{alignat}
\end{Pb}

In order to state our results we recall the precise notion of excess:
\begin{Def}
	If $\parti(\H)$ denotes the family of all subsets of $\H$, then the \emph{excess} is the function
	$\e : \parti(\H)\times\parti(\H) \function \clint{0,\infty}$ defined by
	\begin{equation}\notag
		\e(\Z_1,\Z_2) := \sup_{z_1 \in \Z_1} \d(z_1,\Z_2) =
		\inf\{\rho \ge 0\ :\ \Z_1 \subseteq \Z_2 + \overline{B}_\rho(0)\}, \qquad \Z_1, \Z_2 \in \parti(\H).
	\end{equation}
	The extended number $\e(\Z_1,\Z_2)$ is called \emph{excess of $\Z_1$ over $\Z_2$}.
\end{Def}
Observe that $\e(\void, \Z) = 0$ for every $\Z \in \parti(\H)$ and $\e(\Z, \void) = \infty$ for every
$\Z \in \parti(\H) \setmeno \{\void\}$. The excess $\e$ is an asymmetric semidistance, in particular the following properties hold (see \cite[Section 2a]{Mor74b}):
\begin{align*}
	 & \e(\Z_1,\Z_2) = 0 \ \Longleftrightarrow \Z_1 \subseteq \overline{\Z_2}                                    &
	 & \forall \Z_1, \Z_2 \in \parti(\H),                                                                          \\
	 & \e(\Z_1,\Z_2) \le \e(\Z_1,\Z_3) + \e(\Z_3,\Z_2)                                                           &
	 & \forall \Z_1, \Z_2, \Z_3 \in \parti(\H),                                                                    \\
	 & \e(\Z_1,\Z_2) = \e(\overline{\Z_1},\Z_2) = \e(\Z_1,\overline{\Z_2}) = \e(\overline{\Z_1},\overline{\Z_2}) &
	 & \forall \Z_1, \Z_2 \in \parti(\H).
\end{align*}

The main existence theorem is the following. It improves the existing result \cite[Theorem~4.2]{ColMon03} in two ways: first,
the continuity of the multivalued mapping $t \longmapsto \partial\C(t)$ is completely dropped; second,
the continuity of the driving set $t \longmapsto \C(t)$ is assumed with respect to the asymmetric distance $\e$, the excess, and not with respect to the Hausdorff distance, thereby emphasizing the unilateral character of the rating of the data, as observed by Moreau in its celebrated paper \cite{Mor77} in the convex framework. As a consequence we are able to provide an existence/uniqueness result for a much wider class of moving constraints $\C(t)$.

\begin{Thm}\label{main th1}
	Let $\C : \clint{0,T} \function \Conv_r(\H)$, a function with values in the family of $r$-prox-regular subsets of $\H$, satisfying the condition
	\begin{equation}\label{e-cont->e-unifcont}
		\forall \eps > 0\  \exists \delta > 0 \ :\ [0 \le t - s < \delta \ \Longrightarrow\  \e(\C(s),\C(t)) < \eps].
	\end{equation}
	If there exist $R > 0$ and $d > 0$ such that for every $t \in \clint{0,T}$
	\begin{equation}\label{unif. int. cone main thm}
		\forall x \in \partial\C(t) \ \exists z \in \C(t) \cap B_d(x) \ :\
		(1-\lambda)x+\lambda B_R(z) \subseteq \C(t), \ \forall \lambda \in \clint{0,1},
	\end{equation}
	then for every  $y_0 \in \C(0)$ there exists a unique solution $y \in \BV(\clint{0,T};\H)$ of Problem
	\ref{Pb}. Moreover $y \in \Czero(\clint{0,T};\H)$ and $\V(y,\clint{0,T}) \le C$
	for a constant $C > 0$ depending only on $r$, $R > 0$, $d > 0$, and on the continuity modulus of $\C$.
\end{Thm}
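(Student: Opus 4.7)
The overall strategy is a Moreau catching-up algorithm combined with the uniform interior cone condition to obtain a variation bound, followed by a compactness argument in $\BV$ and the standard prox-regular monotonicity for uniqueness. For each $n \in \en$, use the uniform continuity \eqref{e-cont->e-unifcont} to pick $\delta_n \searrow 0$ so that $\omega_n := \omega(\delta_n) < r/2$, and take a partition $0 = t^n_0 < \cdots < t^n_{N_n} = T$ of mesh less than $\delta_n$. Define $y_n(t^n_0) := y_0$ and recursively $y_n(t^n_i) := \P_{\C(t^n_i)}(y_n(t^n_{i-1}))$; this is single-valued by Proposition~\ref{1proj in seg}, since
\[
	\d\bigl(y_n(t^n_{i-1}), \C(t^n_i)\bigr) \le \e\bigl(\C(t^n_{i-1}), \C(t^n_i)\bigr) \le \omega_n < r.
\]
Extend $y_n$ to $\clint{0,T}$ as a left-continuous step function.

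The central technical hurdle is to prove a uniform estimate $\V(y_n, \clint{0,T}) \le C$ independent of $n$. The naive bound $\V(y_n) \le \sum_i \e(\C(t^n_{i-1}), \C(t^n_i)) \le (T/\delta_n)\omega(\delta_n)$ is useless because, for a generic excess-continuity modulus, the ratio $\omega(\delta)/\delta$ diverges as $\delta \searrow 0$. The uniform interior cone condition \eqref{unif. int. cone main thm} is precisely what upgrades this to a bound depending only on $r$, $R$, $d$, $\omega$, and $T$: at every step the vector $y_n(t^n_{i-1}) - y_n(t^n_i)$ is a proximal normal to the $r$-prox-regular set $\C(t^n_i)$ at $y_n(t^n_i)$, so Theorem~\ref{charact proxreg}(iii), applied to the cone-displaced point $w := z_i + R\,(y_n(t^n_{i-1}) - y_n(t^n_i))/\|y_n(t^n_{i-1}) - y_n(t^n_i)\|$, with $z_i$ chosen from the interior cone \eqref{unif. int. cone main thm} near $y_n(t^n_i)$ so that $\|z_i - y_n(t^n_i)\| \le d + O(\omega_n)$ and $\overline{B}_R(z_i) \subseteq \C(t^n_i)$, produces a Vladimirov-type estimate of the form
\[
	\alpha\,\|y_n(t^n_{i-1}) - y_n(t^n_i)\| \le \bduality{y_n(t^n_{i-1}) - y_n(t^n_i)}{z_i - y_n(t^n_i)} + O(\omega_n^2),
\]
with $\alpha = \alpha(r,R,d) > 0$. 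Telescoping this inequality against a moving reference-point energy (in the spirit of Vladimirov--Krasnoselskii for convex bodies with non-empty interior, here adapted to the prox-regular setting following \cite{KreMonRec22, RecStr25}) then delivers the desired uniform variation bound. This is the most delicate point of the argument.

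With $\sup_n \V(y_n) < \infty$ in hand, Helly's selection theorem supplies a pointwise subsequential limit $y \in \BV(\clint{0,T};\H)$ with $\V(y, \clint{0,T}) \le C$. One checks $y(t) \in \C(t)$ from $\d(y_n(t), \C(t)) \le \omega_n \to 0$ together with closedness of $\C(t)$, and $y \in \Czero(\clint{0,T};\H)$ because each individual jump of $y_n$ is at most $\omega_n$ and excess-continuity of $\C$ forbids persistent jumps in the limit. To recover the differential inclusion \eqref{Dy = w}--\eqref{BV-d.i.}, write $\D y_n = v_n \mu_n$ with $\mu_n := \vartot{\D y_n}$ and $\|v_n\| = 1$ $\mu_n$-a.e., extract weak-$*$ limits $\mu_n \convergedebstar \mu$ and $v_n \mu_n \convergedebstar v\mu$, and pass to the limit in the discrete prox-regular inequality
\[
	\bduality{-v_n(t)}{z - y_n(t)} \le \frac{1}{2r}\,\norm{z - y_n(t)}{}^2, \qquad z \in \C(t^n_i),
\]
which, via Theorem~\ref{charact proxreg}(iii), yields $v(t) \in -N_{\C(t)}(y(t))$ for $\mu$-a.e.\ $t$.

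For uniqueness, given two solutions $(y_j, v_j, \mu_j)$, $j=1,2$, apply Theorem~\ref{charact proxreg}(iii) to the proximal normals $-v_j(t)$ tested against $y_k(t) \in \C(t)$ ($k \ne j$) and compute the differential measure of $\tfrac{1}{2}\norm{y_1 - y_2}{}^2$: this produces the finite-variation inequality
\[
	\d \tfrac{1}{2}\norm{y_1 - y_2}{}^2 \le \frac{\norm{y_1 - y_2}{}^2}{2r}\,\bigl(\d\vartot{\D y_1} + \d\vartot{\D y_2}\bigr),
\]
whence a $\BV$-version of Gronwall's lemma, together with $y_1(0) = y_2(0) = y_0$, forces $y_1 \equiv y_2$.
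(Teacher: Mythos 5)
Your overall architecture (catching-up scheme, uniform variation bound from the interior cone condition, compactness, limit passage, uniqueness via hypomonotonicity plus a $\BV$ Gronwall argument) coincides with the paper's. However, there is a genuine gap at exactly the point you flag as ``the most delicate'': the uniform bound on $\V(y_n,\clint{0,T})$. The single-step inequality you derive from Theorem~\ref{charact proxreg}(iii) with the cone point $z_i$ is fine, but it cannot be ``telescoped against a moving reference-point energy'' as written: with $z_i$ chosen anew at each step from \eqref{unif. int. cone main thm}, the sum $\sum_i\bduality{y^n_{i-1}-y^n_i}{z_i-y^n_i}$ has no telescoping structure, and the only generic bound $\bduality{y^n_{i-1}-y^n_i}{z_i-y^n_i}\le d\,\norm{y^n_{i-1}-y^n_i}{}$ returns an inequality of the form $\alpha\V\le d\,\V+\dots$ with $\alpha\le d$, which is vacuous. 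The energy argument (Lemma~\ref{L:TV-est}, taken from \cite{ColMon03}) requires a \emph{fixed} ball $B_\rho(w)$ contained in \emph{all} the sets over the window being telescoped. The missing ingredient is the paper's Lemma~\ref{Lunifball}: excess-continuity alone forces any ball $B_{\rho_0}(x)\subseteq\C(t_0)$ to persist, with any smaller radius $\rho<\rho_0$, inside $\C(t)$ for all $t\in\clint{t_0,t_0+\tau}$, with $\tau$ depending only on $\omega,r,\rho_0,\rho$. This is what lets one freeze the reference ball produced by the cone condition at a boundary point $y^n_i$ for a macroscopic time $\tau$, apply Lemma~\ref{L:TV-est} on that window, and conclude with at most $\lceil 2T/\tau\rceil$ windows (Theorem~\ref{TV con cond}); it is also precisely the device that replaces the Hausdorff-continuity of $\partial\C(t)$ assumed in \cite{ColMon03}. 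Your sketch never produces such a persistence statement, so the variation bound is not established. (A separate case is also needed when $y^n_i\in\Int(\C(t^n_i))$, where \eqref{unif. int. cone main thm} provides no cone at $y^n_i$; the paper observes that the iterate is then stationary until it reaches the boundary, at which point the previous case applies.)

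A secondary issue: in an infinite-dimensional $\H$, Helly's selection theorem only yields weak pointwise convergence of a subsequence, which is not enough to conclude $y(t)\in\C(t)$ (prox-regular sets need not be weakly closed) nor to pass to the limit in the quadratic terms $\norm{z-y_n(t)}{}^2$ of the discrete normal-cone inequality. The paper instead proves strong uniform convergence of the whole interpolant sequence via a Cauchy estimate that itself exploits the $r$-prox-regularity and the excess-continuity (Theorem~\ref{T:unif conv}); some such strong compactness statement is needed in your limit passage as well. The uniqueness argument you give is the standard one and is fine (the paper simply cites Thibault for it).
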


Condition \eqref{unif. int. cone main thm} can be referred to as a ``uniform inner cone condition'', which is a regularity assumption on the boundary of $\C(t)$. This regularity assumption on the boundary can be dropped if one assumes that there exists a fixed ball
contained on all the sets $\C(t)$, but this ball should be big enough (cf.\ the compatibility condition \eqref{compcond main thm}). This result in contained in the following theorem which generalizes \cite[Theorem~ 4.1]{ColMon03} by assuming the continuity of $\C(t)$ with respect to the excess only.

\begin{Thm}\label{main th2}
	Let $\C : \clint{0,T} \function \Conv_r(\H)$, a function with values in the family of $r$-prox-regular subsets of $\H$, satisfying the condition
	\begin{equation}\label{e-cont->e-unifcont.}
		\forall \eps > 0\  \exists \delta > 0 \ :\ [0 \le t - s < \delta \ \Longrightarrow\  \e(\C(s),\C(t)) < \eps].
	\end{equation}
	Assume that $y_0 \in \C(0)$.
	If
	\begin{equation}
		\exists w \in \H, \ \exists \rho > 0\ :\ \big[\,B_\rho(w) \subseteq \C(t)\  \forall t \in \clint{0,T}\,\big],
	\end{equation}
	and
	\begin{equation}\label{compcond main thm}
		(\norm{y_0 - w}{} + \rho)^2 < 2r\rho,
	\end{equation}
	then there exists a unique solution $y \in \BV(\clint{0,T};\H)$ of Problem \ref{Pb}. Moreover $y \in \Czero(\clint{0,T};\H)$ and
	$\V(y,\clint{0,T}) \le C$ for a constant $C > 0$ depending only on $r$, $w$, $\rho$, and on the continuity modulus of $\C$.
\end{Thm}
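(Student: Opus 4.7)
For each $n \in \en$, let $\tau_n := T/n$, $t_k^n := k\tau_n$, and define $y_0^n := y_0$, $y_{k+1}^n := \P_{\C(t_{k+1}^n)}(y_k^n)$ for $k = 0, \ldots, n-1$. Condition \eqref{e-cont->e-unifcont.} gives $\d_{\C(t_{k+1}^n)}(y_k^n) \le \e(\C(t_k^n), \C(t_{k+1}^n)) \le \omega(\tau_n) < r$ for $n$ large, so the projection is single-valued by Proposition \ref{1proj in seg}, and the increment $u_k^n := y_k^n - y_{k+1}^n$ lies in $N_{\C(t_{k+1}^n)}(y_{k+1}^n)$ by \eqref{normal cone}. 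This is the standard Moreau catching-up scheme.

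\textbf{A priori estimate (the crux).} I would apply the prox-regular normal inequality of Theorem \ref{charact proxreg}(iii) at the point $y_{k+1}^n$ with the test direction $\zeta := w + \rho\, u_k^n/\norm{u_k^n}{} \in \overline{B}_\rho(w) \subseteq \C(t_{k+1}^n)$ (when $u_k^n \neq 0$) and combine with the identity
\[
	\norm{y_{k+1}^n - w}{}^2 = \norm{y_k^n - w}{}^2 - 2\duality{u_k^n}{y_k^n - w} + \norm{u_k^n}{}^2
\]
to extract, after rearrangement, a telescoping bound of the schematic form
\[
	\bigl(\sqrt{2r\rho} - \norm{y_{k+1}^n - w}{} - \rho\bigr)\,\norm{u_k^n}{} \le \norm{y_k^n - w}{}^2 - \norm{y_{k+1}^n - w}{}^2 + C\,\omega(\tau_n).
\]
The strict compatibility inequality $(\norm{y_0 - w}{} + \rho)^2 < 2r\rho$ provides an initial margin $\sqrt{2r\rho} - \norm{y_0 - w}{} - \rho > 0$; propagating by induction on $k$ should yield simultaneously $\norm{y_k^n - w}{} \le M < \sqrt{2r\rho} - \rho$ and $\sum_{k} \norm{u_k^n}{} \le C$ uniformly in $n$, hence $\pV(y^n, \clint{0,T}) \le C$ for the piecewise constant interpolant $y^n$.

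\textbf{Passage to the limit and uniqueness.} Helly's selection theorem then extracts a pointwise-convergent subsequence $y^{n_j} \to y \in \BV(\clint{0,T};\H)$; the excess-continuity of $\C$ combined with $y^n(t) \in \C(t_k^n)$ for the appropriate $k$ gives $y(t) \in \C(t)$. Rewriting the catching-up step as
\[
	\duality{y_{k+1}^n - y_k^n}{\zeta - y_{k+1}^n} \le \frac{\norm{y_{k+1}^n - y_k^n}{}}{2r}\,\norm{\zeta - y_{k+1}^n}{}^2 \quad \forall \zeta \in \C(t_{k+1}^n),
\]
and passing to the limit on the (uniformly bounded-variation) vector measures $\D y^n$ yields \eqref{Dy = w}--\eqref{BV-d.i.}; continuity of $y$ follows from the step-wise bound $\norm{u_k^n}{} \le C\omega(\tau_n)$ (extractable from the same inequality applied individually) together with the excess-continuity of $\C$. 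For uniqueness, given two solutions $y_1, y_2$ one applies Theorem \ref{charact proxreg}(iii) to $-v_i(t) \in N_{\C(t)}(y_i(t))$ with swapped test point $y_j(t) \in \C(t)$, obtaining $\de \norm{y_1 - y_2}{}^2 \le C\, \norm{y_1 - y_2}{}^2\, \de\mu$ with $\mu$ dominating the defect measures, hence $y_1 \equiv y_2$ since $y_1(0) = y_2(0)$.

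\textbf{Main obstacle.} The hardest part is closing the a priori estimate: the inequality is implicitly coupled in $\norm{u_k^n}{}$ and $\norm{y_{k+1}^n - w}{}$, and only the strict margin provided by $(\norm{y_0 - w}{} + \rho)^2 < 2r\rho$ gives enough slack for the telescoping bound to survive the inductive propagation in $k$ and the limit $n \to \infty$.
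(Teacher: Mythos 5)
Your overall architecture (catching-up scheme, a priori variation bound from the interior ball plus the compatibility condition, limit passage, Gr\"onwall-type uniqueness) matches the paper's, and your a priori estimate is essentially sound: testing the prox-regular normal inequality at $y_{k+1}^n$ with $\zeta = w + \rho\, u_k^n/\norm{u_k^n}{}$ yields $2c\norm{u_k^n}{} \le \norm{y_k^n-w}{}^2 - \norm{y_{k+1}^n-w}{}^2$ with $c = \rho - (\norm{y_{k+1}^n-w}{}+\rho)^2/(2r)$; the resulting monotonicity of $k \longmapsto \norm{y_k^n - w}{}$ propagates the margin guaranteed by \eqref{compcond main thm}, and the sum telescopes without any $\omega(\tau_n)$ error term. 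This is exactly the content of \cite[Lemma 2.2]{ColMon03}, which the paper invokes as Lemma \ref{L:TV-est} and applies in Lemma \ref{TVest-ball} (with a small $\eps_n$-correction in $\alpha$).

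The genuine gap is the compactness step. $\H$ is an arbitrary real Hilbert space, so Helly's selection theorem does not produce a strongly pointwise convergent subsequence of $(y^n)$; at best one extracts weak pointwise limits, which are useless here: prox-regular sets are not weakly closed, so you cannot conclude $y(t)\in\C(t)$, and you cannot pass to the limit in the quadratic variational inequality. The paper's replacement for Helly --- and its main technical point under excess-only continuity --- is the Cauchy estimate of Theorem \ref{T:unif conv}: working with \emph{nested} subdivisions, one bounds $\tfrac{\de}{\de t}\norm{x_{n+1}(t)-x_n(t)}{}^2$ by inserting the projections $\P_{\C(\theta_{n+1}(t))}(x_n(t^n_{j-1}))$ and $\P_{\C(\theta_n(t))}(x_{n+1}(\theta_{n+1}(t)))$, whose existence is guaranteed only by the one-sided excess bound \eqref{CforP}, and closes with Gr\"onwall to get $\norm{x_{n+1}-x_n}{\infty}^2 \le K\eps_n$ with $\sum_n\eps_n<\infty$; uniform convergence of the whole approximating sequence (hence continuity of $y$ and $y(t)\in\C(t)$) follows. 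Your proposal contains no substitute for this. A second, smaller omission: to pass to the limit in \eqref{BV-d.i.} you need, for each $\tau$ and each $z_\tau\in\C(\tau)$, a \emph{continuous} selection $s\longmapsto z(s)\in\C(s)$ with $z(\tau)=z_\tau$, because the test point must belong to the moving set at the varying times charged by the measures $\D y^n$; the paper manufactures such a selection by rerunning the whole existence machinery from the initial datum $z_\tau$ (proof of Theorem \ref{limit}), whereas your sketch tests with a fixed $\zeta\in\C(t_{k+1}^n)$ and does not address how this survives the limit.
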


The proofs of Theorem \ref{main th1} and Theorem \ref{main th2} are contained in the following section \ref{proofs}.

%%%%%%%%%%%%%%%%%%%%%%%%%%%%%%%%%%%%%%%%
%%%%%%%%%%%%%%%%%%%%%%%%%%%%%%%%%%%%%%%%

\section{Proofs}\label{proofs}

\subsection{Catching-up algorithm}

In the following lemma we describe the Euler implicit discretization of \eqref{BV-d.i.}. This discretization scheme
(\eqref{implicit scheme for Lip-sweeping processes1}--\eqref{implicit scheme for Lip-sweeping processes3}) is usually called ``catching-up algorithm'' (cf. \cite[p. 353]{Mor77}), due to its evident geometrical meaning in the case of sweeping processes (see \eqref{stepproj} below).

\begin{Lem}\label{catchup}
	For every $t \in \clint{0,T}$ let $\C(t)$ be an $r$-prox-regular subset of $\H$, and assume that \eqref{e-cont->e-unifcont} holds. If $(\eps_n)_{n\in \en}$ is a sequence in $\ar$ such that
	\begin{equation}\label{epsn-1}
		0 < \eps_{n+1} < \eps_n < r \quad \forall n \in \en, \qquad  \lim_{n \to \infty} \eps_n = 0,
	\end{equation}
	then there exists a sequence $(\delta_n)_{n \in \en}$ in $\ar$, such that
	\begin{align}
		 & 0 < \delta_{n+1} < \delta_n \quad \forall n \in \en, \qquad \lim_{k \to \infty}\delta_k = 0, \label{deltan} \\
		 & \sup\{\e(\C(s),\C(t))\ :\ 0 \le t - s \le \delta_n\}  < \eps_n < r \quad \forall n \in \en. \label{CforP}
	\end{align}
	Moreover for every $n \in \en$ let $\mathfrak{S}_n$ be a subdivision of $\clint{0,T}$ satisfying the conditions
	\begin{align}
		 & \mathfrak{S}_n = \{t^n_j \in \clint{0,T}\ :\ j=0, \ldots, J^n\} \text{ for some $J_n \in \en$}, \label{subd1} \\
		 & t_0^n = 0,\  t_{J_n}^n = T,\quad  t^n_{j-1} < t_j^n \ \forall j \in \{0, \ldots, J^n\}, \quad
		\max_{j=1,\ldots,J^n} (t_j^n - t^n_{j-1}) \le \delta_n, \label{subd3}                                            \\
		 & \mathfrak{S}_n \subseteq \mathfrak{S}_{n+1}. \label{subd4}
	\end{align}
	Then for every $y_0 \in \C(0)$ and every $n \in \en$ there exists a family $(y_j^n)_{j =0}^{J^n}$ in $\H$ such that
	\begin{alignat}{3}
		 & y_0^n := y_0,
		\label{implicit scheme for Lip-sweeping processes1}                                                                       \\
		 & y_j^n \in \C(t_j^n)                                                   & \qquad & \forall j \in \en, \ 0 \le j \le J^n,
		\label{implicit scheme for Lip-sweeping processes2}                                                                       \\
		 & \dfrac{y_j^n -y_{j-1}^n}{t_j^n - t_{j-1}^n} \in -N_{\C(t_j^n)}(y_j^n) & \qquad & \forall j \in \en, \ 1 \le j \le J^n.
		\label{implicit scheme for Lip-sweeping processes3}
	\end{alignat}
	Under the above assumptions, condition \eqref{implicit scheme for Lip-sweeping processes3} is equivalent to
	\begin{equation}\label{stepproj}
		y_j^n = \P_{\C(t_j^n)}(y_{j-1}^n) \qquad  \forall j \in \en, \ 1 \le j \le J^n.
	\end{equation}
\end{Lem}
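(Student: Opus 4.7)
The plan is to prove the lemma in three stages: first produce the mesh bounds $(\delta_n)$ from the excess-continuity hypothesis; then inductively construct the iterates $y_j^n$ as projections, verifying \eqref{implicit scheme for Lip-sweeping processes1}--\eqref{implicit scheme for Lip-sweeping processes3}; finally identify the inclusion \eqref{implicit scheme for Lip-sweeping processes3} with the projection formula \eqref{stepproj}. For the first stage, I apply \eqref{e-cont->e-unifcont} with $\eps_n/2$ in place of $\eps$ to obtain, for each $n$, some $\delta^*_n > 0$ such that $\e(\C(s),\C(t)) < \eps_n/2$ whenever $0 \le t - s < \delta^*_n$. Setting $\delta_1 := \delta^*_1/2$ and, recursively, $\delta_n := \min\{\delta^*_n,\, \delta_{n-1}/2\}$ for $n \ge 2$ produces a strictly decreasing positive sequence tending to zero; the supremum in \eqref{CforP} is then bounded by $\eps_n/2 < \eps_n < r$.

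The second stage is an induction on $j$. For $j = 0$, set $y_0^n := y_0$, so that \eqref{implicit scheme for Lip-sweeping processes1} holds and $y_0 \in \C(0) = \C(t_0^n)$. For the inductive step, assume $y_{j-1}^n \in \C(t_{j-1}^n)$. Since $t_j^n - t_{j-1}^n \le \delta_n$, \eqref{CforP} yields
\[
\d(y_{j-1}^n,\, \C(t_j^n)) \le \e(\C(t_{j-1}^n), \C(t_j^n)) < \eps_n < r,
\]
so by $r$-prox-regularity and Proposition \ref{1proj in seg}, $\Proj_{\C(t_j^n)}(y_{j-1}^n)$ is a singleton; I define $y_j^n := \P_{\C(t_j^n)}(y_{j-1}^n)$. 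This grants \eqref{implicit scheme for Lip-sweeping processes2}, while $y_{j-1}^n - y_j^n \in N_{\C(t_j^n)}(y_j^n)$ follows directly from the definition \eqref{normal cone}; multiplying by the positive scalar $1/(t_j^n - t_{j-1}^n)$ and using that a normal cone is stable under positive scaling yields \eqref{implicit scheme for Lip-sweeping processes3}.

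For the equivalence, the direction \eqref{stepproj} $\Rightarrow$ \eqref{implicit scheme for Lip-sweeping processes3} was just established. For the converse, given any $y_j^n \in \C(t_j^n)$ satisfying \eqref{implicit scheme for Lip-sweeping processes3}, I let $p := \P_{\C(t_j^n)}(y_{j-1}^n)$ and apply the prox-regularity inequality of Theorem \ref{charact proxreg}(iii) to the proximal normals $y_{j-1}^n - y_j^n \in N_{\C(t_j^n)}(y_j^n)$ (tested at $z = p$) and $y_{j-1}^n - p \in N_{\C(t_j^n)}(p)$ (tested at $z = y_j^n$); adding the two inequalities produces the key estimate
\[
\|y_j^n - p\|^2 \le \frac{\|y_{j-1}^n - y_j^n\| + \|y_{j-1}^n - p\|}{2r}\, \|y_j^n - p\|^2.
\]
The main obstacle is ensuring the coefficient on the right is strictly less than $1$: the estimate $\|y_{j-1}^n - p\| = \d(y_{j-1}^n,\C(t_j^n)) < \eps_n < r$ is automatic, but the companion bound $\|y_{j-1}^n - y_j^n\| < r$ must also be in force. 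I would secure it by running the equivalence jointly with the inductive construction of the second stage, so that at each step the candidate $y_j^n$ coincides with the projection $\P_{\C(t_j^n)}(y_{j-1}^n)$ and the displacement is controlled by $\d(y_{j-1}^n,\C(t_j^n)) < \eps_n < r$, after which the coefficient is strictly less than $1$ and forces $y_j^n = p$.
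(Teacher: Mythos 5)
Your first two stages reproduce the paper's proof: the existence of $(\delta_n)$ is read off from \eqref{e-cont->e-unifcont} (the paper dismisses it as straightforward), and the iterates are built by exactly the same induction, using \eqref{CforP} to get $\d(y_{j-1}^n,\C(t_j^n))<\eps_n<r$, hence a single-valued projection $y_j^n=\P_{\C(t_j^n)}(y_{j-1}^n)$ with $y_{j-1}^n-y_j^n\in N_{\C(t_j^n)}(y_j^n)$, and then the fact that the normal cone is a cone to pass to \eqref{implicit scheme for Lip-sweeping processes3}. One cosmetic slip in your first stage: \eqref{e-cont->e-unifcont} controls pairs with $0\le t-s<\delta_n^*$ (strict), while \eqref{CforP} is a supremum over $0\le t-s\le\delta_n$ (non-strict), so you must keep $\delta_n$ strictly below $\delta_n^*$; your recursion $\delta_n:=\min\{\delta_n^*,\delta_{n-1}/2\}$ can return $\delta_n=\delta_n^*$, so replace it by, say, $\delta_n:=\min\{\delta_n^*/2,\delta_{n-1}/2\}$.

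The genuine problem is your third stage, i.e.\ the implication \eqref{implicit scheme for Lip-sweeping processes3} $\Rightarrow$ \eqref{stepproj}. Your hypomonotonicity computation (two applications of Theorem~\ref{charact proxreg}(iii), added) is correct, but it only forces $y_j^n=p$ when $\norm{y_{j-1}^n-y_j^n}{}+\norm{y_{j-1}^n-p}{}<2r$, and your way of ``securing'' the missing bound $\norm{y_{j-1}^n-y_j^n}{}<r$ --- running the argument jointly with the construction so that ``the candidate $y_j^n$ coincides with the projection'' --- is circular: if $y_j^n$ is already the projection there is nothing left to prove. The obstruction you identified is real and cannot be argued away: take $\H=\ar^2$, $\C(t)\equiv\K:=\{x\in\ar^2:\norm{x}{}=1\}$ (which is $1$-prox-regular and for which \eqref{e-cont->e-unifcont} is trivial), $y_0=y_{j-1}^n=(0,1)$ and $y_j^n=(0,-1)$; then $y_{j-1}^n-y_j^n=(0,2)\in N_\K(y_j^n)$, so \eqref{implicit scheme for Lip-sweeping processes3} holds, yet $\P_\K(y_{j-1}^n)=(0,1)\neq y_j^n$. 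So the stated equivalence only holds under the additional proviso that the increments stay below the prox-regularity threshold (as they automatically do for the family produced by the projections). To be fair to you, the paper's own proof establishes only the construction and the direction \eqref{stepproj} $\Rightarrow$ \eqref{implicit scheme for Lip-sweeping processes3}, and is silent on the converse; you attempted more than the authors prove, found the correct obstacle, but did not close the argument.
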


\begin{proof}
	The existence of the sequence $\delta_n$ satisfying \eqref{deltan} and \eqref{CforP} is a straightforward consequence of \eqref{e-cont->e-unifcont}. Now for every $n \in \en$ set
	\[
		y_0^n := y_0 \in \C(0) = \C(t_0^n),
	\]
	thus from \eqref{CforP} we have that $\d(y_0^n,\C(t_1^n)) \le \e(\C(a),\C(t^n_1)) < r$, and by the
	$r$-prox-regularity of $\C(t_1^n)$ we infer that there exists $y_1^n := \P_{\C(t_1^n)}(y_0^n) \in \C(t_1^n)$, so that $y_0^n - y_1^n \in N_{\C(t_1^n)}(t_1^n)$. For any integer $j$ such that $2 \le j \le J_n$, let us assume that $y_{j-1}^n$ satisfies the inclusions $y_{j-1}^n \in \C(t_{j-1}^n)$ and
	$y_{j-2}^n-y_{j-1}^n \in N_{\C(t_{j-1}^n)}(y_{j-1}^n)$. Hence by virtue of \eqref{CforP} we have
	$
		\d(y_{j-1}^n, \C(t_j^n)) \le \e(\C(t_{j-1}^n),\C(t_j^n)) < r,
	$
	so that there exists
	\begin{equation}\label{LT3}
		y_j^n := \P_{\C(t_j^n)}(y_{j-1}^n) \qquad \forall j \in \en,
	\end{equation}
	and $y_{j-1}^n-y_j^n  \in N_{\C(t_j^n)}(y_j^n)$. Therefore, recalling that $N_{\C(t)}(z)$ is a cone for every
	$z \in \C(t)$ and for every $t \in \clint{0,T}$, we have recursively defined a family $(y_j^n)_{j =0}^{J^n}$ in $\H$ satisfying
	\eqref{implicit scheme for Lip-sweeping processes1}--\eqref{implicit scheme for Lip-sweeping processes3}.
\end{proof}

\begin{Lem}\label{exact discr.sol}
	Under the assumptions of Lemma \ref{catchup}, let $(\eps_n)_{n\in \en}$ and $(\delta_n)_{n \in \en}$ be
	sequences in $\ar$ such that \eqref{epsn-1} and \eqref{deltan}--\eqref{CforP} hold, let
	$(\mathfrak{S}_n)_{n \in \en}$ be a sequence of subdivisions of $\clint{0,T}$ satisfying
	\eqref{subd1}--\eqref{subd4}, and let $(y_j^n)_{j=1}^{J^n}$ be the family in $\H$ satisfying
	\eqref{implicit scheme for Lip-sweeping processes1}--\eqref{implicit scheme for Lip-sweeping processes3}.
	For every $n \in \en$ let $y_n : \clint{0,T} \function \H$ be
	the right continuous step function defined by
	\begin{align}
		 & y_n(0) := y_0^n = y_0, \label{yn(0)}                                                                                          \\
		 & y_n(t) := y_{j-1}^n \qquad \text{if $t \in \clsxint{t_{j-1}^{n}, t_{j}^{n}}$}, \quad j \in \en, \ 1 \le j \le J^n, \label{yn}
	\end{align}
	which we will call \emph{discrete solution of Problem \ref{Pb}},
	and let $x_n : \clint{0,T} \function \H$ be the piecewise affine interpolant of $(y_j^n)_{j=0}^{J^n}$, i.e.
	\begin{equation}\label{x_n}
		x_n(t) := y_{j-1}^n + \frac{t-t^n_{j-1}}{t_j^n - t_{j-1}^n}(y_j^n - y_{j-1}^n)
		\qquad \text{if $t \in \clsxint{t_{j-1}^n, t_j^n}$}, \quad j \in \en, \ 1 \le j \le J^n.
	\end{equation}
	Then
	\begin{equation}\label{xn-yn to 0}
		\lim_{n \to \infty} \norm{y_n - x_n}{\infty} = 0,
	\end{equation}
	and $y_n$ solves Problem \ref{Pb} with $\C(t)$ replaced by $\C^n(t)$ defined by
	\[
		\C^n(t) := \C(t_{j-1}^n) \qquad \text{if $t \in \clsxint{t_{j-1}^{n}, t_{j}^{n}}$},
	\]
	i.e. if $\mu_n : \borel(\clint{0,T}) \function \clsxint{0,\infty}$ and $v_n : \clint{0,T} \function \H$ are defined by
	\[
		\mu_n := \sum_{j=1}^{J^n}\delta_{t_j^n}, \qquad
		v_n(t) := \sum_{j=1}^{J^n}(y_j^n - y_{j-1}^n)\indicator_{\{t_j^n\}}(t) = \D y_n(t), \qquad t \in \clint{0,T},
	\]
	then $v_n \in \L^1(\mu_n;\H)$ and
	\[
		\D y_n(t) = v_n(t) \mu_n, \quad
		v_n(t) \in -N_{\C^n(t)}(y_n(t)) \qquad \text{$\forall t \in \clint{0,T}$}.
	\]
\end{Lem}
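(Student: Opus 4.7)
The plan splits into two largely independent parts. First I would establish the uniform estimate \eqref{xn-yn to 0}, which carries essentially all the analytical content of the lemma; second, I would verify the discretized sweeping conditions for $y_n$, which is mostly a matter of unpacking the definitions.

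For \eqref{xn-yn to 0}, fix $n \in \en$ and $t \in \clsxint{t_{j-1}^n, t_j^n}$. From \eqref{yn} and \eqref{x_n},
\[
y_n(t) - x_n(t) = -\frac{t - t_{j-1}^n}{t_j^n - t_{j-1}^n}\,(y_j^n - y_{j-1}^n),
\]
so $\norm{y_n(t) - x_n(t)}{} \le \norm{y_j^n - y_{j-1}^n}{}$ and it suffices to bound this uniformly in $j$. Here the projection interpretation \eqref{stepproj} of the catching-up step is decisive: since $y_j^n = \P_{\C(t_j^n)}(y_{j-1}^n)$ and $y_{j-1}^n \in \C(t_{j-1}^n)$, one has
\[
\norm{y_j^n - y_{j-1}^n}{} = \d(y_{j-1}^n, \C(t_j^n)) \le \e(\C(t_{j-1}^n), \C(t_j^n)) < \eps_n,
\]
where the last inequality uses \eqref{subd3} and \eqref{CforP}. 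Taking the supremum over $t$ and letting $n \to \infty$ with $\eps_n \to 0$ yields \eqref{xn-yn to 0}.

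For the second assertion I would verify each of the discretized conditions in turn. Membership $y_n(t) \in \C^n(t)$ is immediate because, on $\clsxint{t_{j-1}^n, t_j^n}$, both sides are identically $y_{j-1}^n$ and $\C(t_{j-1}^n)$, with $y_{j-1}^n \in \C(t_{j-1}^n)$ by \eqref{implicit scheme for Lip-sweeping processes2}; the initial condition $y_n(0) = y_0$ is \eqref{yn(0)}. The identity $\D y_n = v_n \mu_n$ follows from Theorem \ref{existence of Stietjes measure}: the right-continuous step function $y_n$ has jumps only at the nodes $t_j^n$, of size $y_j^n - y_{j-1}^n$, so its differential measure is the atomic measure $\sum_{j=1}^{J^n}(y_j^n - y_{j-1}^n)\,\delta_{t_j^n}$, which coincides by construction with $v_n \mu_n$. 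The inclusion $v_n(t) \in -N_{\C^n(t)}(y_n(t))$ is trivial off the nodes, where $v_n(t) = 0$ belongs to every normal cone; at $t = t_j^n$ one has $y_n(t_j^n) = y_j^n$ and $\C^n(t_j^n) = \C(t_j^n)$, and the catching-up relation \eqref{implicit scheme for Lip-sweeping processes3} combined with the cone property of $N_{\C(t_j^n)}(y_j^n)$ (scaling by the positive factor $t_j^n - t_{j-1}^n$) yields $v_n(t_j^n) = y_j^n - y_{j-1}^n \in -N_{\C(t_j^n)}(y_j^n)$.

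No substantial obstacle is expected: the whole lemma rests on the single estimate $\norm{y_j^n - y_{j-1}^n}{} < \eps_n$, made possible by the characterization of the catching-up step as a projection onto a nearby prox-regular set, while the remainder is definition-chasing together with a direct application of Theorem \ref{existence of Stietjes measure}.
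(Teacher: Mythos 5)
Your proposal is correct and follows essentially the same route as the paper: the key estimate $\norm{y_j^n - y_{j-1}^n}{} = \d(y_{j-1}^n,\C(t_j^n)) \le \e(\C(t_{j-1}^n),\C(t_j^n)) < \eps_n$ combined with the explicit formula for $y_n(t)-x_n(t)$ gives \eqref{xn-yn to 0}, and the remaining discretized conditions are verified by the same definition-chasing at the nodes and off the nodes. Your treatment of the differential measure via Theorem \ref{existence of Stietjes measure} is slightly more explicit than the paper's, but the argument is the same.
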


\begin{proof}
	Let us observe that by \eqref{implicit scheme for Lip-sweeping processes2}, \eqref{subd3}, and
	\eqref{CforP}, we have that
	\[
		% \label{yj - yj-1-pre}
		\norm{y^n_j - y^n_{j-1}}{} = \d_{\C(t_j^{n})}(y_{j-1}^{n}) \le \e(\C(t_{j-1}^{n}),\C(t_j^{n})) < \eps_n
		\qquad \forall j \in \en, \ 1 \le j \le J^n,
	\]
	thus \eqref{xn-yn to 0} follows from the equality
	\[
		\norm{y_n(t) - x_n(t)}{} =
		\bigg|\frac{t - t_{j-1}^n}{ t_j^n - t_{j-1}^n }\bigg|
		\norm{y_j^n - y_{j-1}^n}{} \qquad
		\forall t \in \cldxint{t_{j-1}^n, t_j^n}.
	\]
	If $0 \le j \le J^n$, from \eqref{implicit scheme for Lip-sweeping processes3} it follows that
	\[
		% \label{Dy_j}
		\D y_n(\{t_j^n\}) = y_j^n - y_{j-1}^n \in -N_{\C(t_j^n)}(y_j^n) = N_{\C(t_j^n)}(y_n(t_j^n)),
	\]
	and we also trivially have that
	\[
		% \label{DI_j}
		\D y_n(\opint{t_{j-1}^n,t_j^n}) = 0 \in N_{\C(t)}(y_n(s)) \qquad
		\forall s \in \opint{t_{j-1}^n,t_j^n}. \qedhere
	\]
\end{proof}

%%%%%%%%%%%%%%%%%%%%%%%%%%%%%%%%%%%%%%%%

\subsection{Uniform convergence of discrete approximations}

In the following theorem we discuss the uniform convergence of the approximated solutions.
%\red{It is worth noting that the way we obtain the Cauchy estimate is rather long and nonstandard}
It is worth noting that one of the main innovations is the way we obtain the Cauchy estimate,
due to the fact that the $\C$ is assumed to be continuous with respect to the excess, and not with respect to the Hausdorff distance.

\begin{Thm}\label{T:unif conv}
	For every $t \in \clint{0,T}$ let $\C(t)$ be an $r$-prox-regular subset of $\H$, and assume that \eqref{e-cont->e-unifcont} holds. Fix $y_0 \in \C(0)$.
	Let  $(\eps_n)_{n\in \en}$ be a sequence in $\ar$ such that
	\begin{equation}\label{epsn-2}
		0 < \eps_{n+1} < \eps_n < r \quad \forall n \in \en, \qquad  \sum_{n = 0}^\infty \eps_n < \infty.
	\end{equation}
	Let $(\delta_n)_{n \in \en}$ be a sequence in $\ar$ satisfy \eqref{deltan}--\eqref{CforP}, let
	$(\mathfrak{S}_n)_{n \in \en}$ be a sequence of subdivisions of $\clint{0,T}$ satisfying
	\eqref{subd1}--\eqref{subd4}, and let $(y_j^n)_j^{J^n}$ be the family in $\H$ satisfying
	\eqref{implicit scheme for Lip-sweeping processes1}--\eqref{implicit scheme for Lip-sweeping processes3}, all of them existing by virtue of  Lemma \ref{catchup}. For every $n \in \en$, let $y_n : \clint{0,T} \function \H$
	be the discrete solution of Problem \ref{Pb}, i.e.\ the right continuous step function defined by \eqref{yn(0)}--\eqref{yn}
	and let $x_n : \clint{0,T} \function \H$ be the piecewise affine interpolant of $(y_j^n)_{j=0}^{J^n}$ defined by
	\eqref{x_n}.
	If
	\[
		\sup_{n \in \en}\V(y_n,\clint{0,T}) < \infty,
	\]
	then $(y_n)_{n \in \en}$ and $(x_n)_{n \in \en}$ are uniformly convergent to a function
	$y \in \BV(\clint{a,b};\H) \cap \Czero(\clint{0,T};\H)$ such that $y(0) = y_0$ and $y(t) \in \C(t)$ for every $t \in \clint{0,T}$.
\end{Thm}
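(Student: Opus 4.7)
The plan is to show that $(x_n)_{n \in \en}$ is uniformly Cauchy; together with $\norm{y_n - x_n}{\infty} \to 0$ from Lemma \ref{exact discr.sol}, this will give the uniform convergence of both sequences to the same limit $y$. The nesting $\mathfrak{S}_n \subseteq \mathfrak{S}_m$ is essential: for $m \ge n$ and $t \in \opint{t_{j-1}^n, t_j^n}$ the nearest point of $\mathfrak{S}_m$ below $t$ lies in $\clsxint{t_{j-1}^n, t_j^n}$, so we may simultaneously write $y_n(t) = y_{j-1}^n$ and $y_m(t) = y_{k-1}^m$ with all relevant grid points contained in $\clint{t_{j-1}^n, t_j^n}$.

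I would then work with $\psi(t) := \tfrac12\norm{x_n(t) - x_m(t)}{}^2$, which is piecewise smooth with $\psi'(t) = \duality{x_n - x_m}{x_n' - x_m'}$. Setting $v_n(t) := -x_n'(t)$ on each subinterval $\opint{t_{j-1}^n, t_j^n}$, the construction in Lemma \ref{catchup} gives $v_n(t) \in N_{\C(t_j^n)}(y_j^n)$. The main obstacle, and precisely the point emphasized in the remark preceding the statement, is that applying Theorem~\ref{charact proxreg}(iii) with this normal requires a test point in $\C(t_j^n)$, whereas $y_m(t)$ only belongs to $\C(t_{k-1}^m)$. I overcome this by projecting $y_m(t)$ onto $\C(t_j^n)$: since $t_{k-1}^m \le t_j^n$ with $t_j^n - t_{k-1}^m \le \delta_n$, hypothesis \eqref{CforP} yields
\[
\d(y_m(t), \C(t_j^n)) \le \e(\C(t_{k-1}^m), \C(t_j^n)) < \eps_n < r,
\]
so $p_m(t) := \P_{\C(t_j^n)}(y_m(t))$ exists by prox-regularity and $\norm{p_m(t) - y_m(t)}{} < \eps_n$. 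The ordering $t_{k-1}^m \le t_j^n$ is critical here: the reverse excess $\e(\C(t_j^n), \C(t_{k-1}^m))$ is \emph{not} controlled by the hypothesis, which is exactly what makes the excess setting harder than the Hausdorff one. For the symmetric term I project $y_{j-1}^n$ onto $\C(t_\ell^m)$, which is admissible because $t_{j-1}^n \le t_\ell^m$ with gap $\le \delta_n$.

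Decomposing $\duality{x_n - x_m}{v_n} = \duality{x_n - y_j^n}{v_n} + \duality{y_j^n - p_m}{v_n} + \duality{p_m - x_m}{v_n}$, the first summand is nonnegative by direct computation on the affine interpolant, the second is bounded below by $-\tfrac{\norm{v_n}{}}{2r}\norm{p_m - y_j^n}{}^2$ via Theorem~\ref{charact proxreg}(iii), and the third by $-\norm{v_n}{}(\eps_n + \norm{y_m - x_m}{\infty})$. Combining with the symmetric estimate for $\duality{x_n - x_m}{v_m}$ and the elementary bound $\norm{p_m(t) - y_j^n}{}^2 \le C\bigl(\eps_n^2 + \norm{y_m - x_m}{\infty}^2 + \norm{x_n(t) - x_m(t)}{}^2\bigr)$ (using $\norm{x_n(t) - y_j^n}{} \le \eps_n$), I arrive at a pointwise Gronwall-type inequality
\[
\psi'(t) \le \frac{C(\norm{v_n(t)}{} + \norm{v_m(t)}{})}{r}\,\psi(t) + (\norm{v_n(t)}{} + \norm{v_m(t)}{})\,\eta_n,
\]
where $\eta_n \to 0$ bundles $\eps_n$, $\norm{y_n - x_n}{\infty}$, and $\norm{y_m - x_m}{\infty}$. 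The hypothesis $\sup_n \V(y_n,\clint{0,T}) < \infty$ makes $\int_0^T (\norm{v_n}{} + \norm{v_m}{}) \de s$ uniformly bounded, so Gronwall's lemma yields $\norm{x_n - x_m}{\infty} \to 0$.

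The remaining conclusions are then routine. The common uniform limit $y = \lim x_n = \lim y_n$ is continuous as the uniform limit of the continuous piecewise-affine $x_n$; it satisfies $y(0) = y_0$ because $y_n(0) = y_0$ for every $n$; and $y \in \BV(\clint{0,T};\H)$ with $\V(y,\clint{0,T}) \le \sup_n \V(y_n,\clint{0,T})$ by lower semicontinuity of the variation under pointwise convergence. Finally, for each $t$, choosing $j$ with $t \in \clsxint{t_{j-1}^n, t_j^n}$ yields $y_n(t) = y_{j-1}^n \in \C(t_{j-1}^n)$ with $0 \le t - t_{j-1}^n \le \delta_n$, so \eqref{CforP} gives $\d(y_n(t), \C(t)) < \eps_n \to 0$ and closedness of $\C(t)$ delivers $y(t) \in \C(t)$.
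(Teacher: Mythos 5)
Your proposal is correct and follows the same core strategy as the paper's proof: differentiate $\norm{x_n - x_m}{}^2$, insert the projection of the point coming from the \emph{earlier} set onto the \emph{later} set (the only direction in which \eqref{CforP} controls the excess), apply the quadratic normal--cone inequality of Theorem~\ref{charact proxreg}(iii), and close the estimate with Gr\"onwall using the uniform bound on $\V(y_n,\clint{0,T})$; the concluding steps (continuity of the limit, $y(0)=y_0$, lower semicontinuity of the variation, and $y(t)\in\C(t)$ via $\d(y_n(t),\C(t))\le\e(\C(t_{j-1}^n),\C(t))\le\eps_n$ and closedness) also coincide. The one genuine difference is that the paper compares only consecutive interpolants $x_n$, $x_{n+1}$, obtains $\norm{x_{n+1}-x_n}{\infty}^2\le K\eps_n$, and sums the telescoping series, whereas you compare $x_n$ with an arbitrary $x_m$, $m\ge n$, exploiting the nesting \eqref{subd4} to keep all relevant grid points of $\mathfrak{S}_m$ inside one interval of $\mathfrak{S}_n$, and get a bound uniform in $m$ that tends to zero with $\eps_n$. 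Your variant buys something real: it yields the uniform Cauchy property directly from $\eps_n\to 0$ and does not actually use the summability in \eqref{epsn-2}, while the telescoping argument needs $\sum_n\sqrt{\eps_n}<\infty$, which is strictly stronger than the stated $\sum_n\eps_n<\infty$ (harmless in the paper since the $\eps_n$ are at the authors' disposal, but your route avoids the issue altogether).
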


\begin{proof}
	We will assume without loss of generality that $\eps_n \le 1$ for all $n \in \en$.
	Let us recall that observe that by \eqref{implicit scheme for Lip-sweeping processes2}, \eqref{subd3}, and
	\eqref{CforP}, we have that
	\begin{equation}\label{yj - yj-1}
		\norm{y^n_j - y^n_{j-1}}{} = \d_{\C(t_j^{n})}(y_{j-1}^{n}) \le \e(\C(t_{j-1}^{n}),\C(t_j^{n})) < \eps_n
		\qquad \forall j \in \en, \ 1 \le j \le J^n.
	\end{equation}
	Let us also define the \emph{time anticipating} function $\theta_n : \clint{0,T} \function \clint{0,T}$ by
	\begin{equation}\label{tetan}
		\theta_n(0) := 0, \qquad \theta_n(t) := t_{j}^n \quad \text{if $t \in \cldxint{t_{j-1}^n,t_j^n}$}, \ j \in \en,\ 1 \le j \le J^n,
	\end{equation}
	so that
	\begin{equation}\label{LT4}
		0 \le \theta_n(t) - t \le \delta_n \qquad \forall t \in \clint{0,T},
	\end{equation}
	and
	\begin{equation}
		x_n(\theta_n(t)) \in \C(\theta_n(t)) \qquad \forall t \in \clint{0,T}.
	\end{equation}
	By \eqref{x_n},\eqref{tetan} and \eqref{implicit scheme for Lip-sweeping processes2}, if $t \in \cldxint{t_{j-1}^n,t_j^n}$ we have $\norm{x_{n}(\theta_{n}(t)) - x_{n}(t)}{} \le \norm{y^n_j - y^n_{j-1}}{}$, thus, thanks to \eqref{yj - yj-1},
	\begin{equation}\label{LT-xn-x}
		\norm{x_{n}(\theta_{n}(t)) - x_{n}(t)}{} \le \eps_n \qquad \forall t \in \clint{0,T}.
	\end{equation}
	We have, by \eqref{implicit scheme for Lip-sweeping processes3}, that
	\begin{equation}\label{x_n' in N}
		x_n'(t) = -\frac{y_{j-1}^n - y_j^n}{t_j^n - t_{j-1}^n} \in N_{C(t_j^n)}(y_j^n) = N_{C(\theta_n(t))}(x(\theta_n(t)))
		\qquad \forall t \in \opint{t_{j-1}^n, t_j^n}, \ \forall j \in \en.
	\end{equation}
	Now let us fix $n \in \en$ and $t \in \clint{0,T}$ such that $t \neq t_j^n$ and $t \neq t_{j}^{n+1}$ for all $j \in \{0,\ldots,J^n\}$. We have that
	\begin{align}
		2\frac{\de }{\de t}\norm{x_{n+1}(t) - x_n(t)}{}^2
		 & = \lduality{x'_{n+1}(t) - x'_n(t)}{x_{n+1}(t) - x_n(t)} \notag                                              \\
		 & = \lduality{x'_{n+1}(t) - x'_n(t)}{x_{n+1}(t) - x_{n+1}(\theta_{n+1}(t))} \notag                            \\
		 & \phantom{=\ }  +  \lduality{x'_{n+1}(t) - x'_n(t)}{x_{n+1}(\theta_{n+1}(t)) - x_{n}(\theta_{n}(t))} \notag  \\
		 & \phantom{=\ }  +  \lduality{x'_{n+1}(t) - x'_n(t)}{x_{n}(\theta_{n}(t)) - x_n(t)} \notag                    \\
		 & \le \norm{x'_{n+1}(t) - x'_n(t)}{}\norm{x_{n+1}(t) - x_{n+1}(\theta_{n+1}(t))}{} \notag                     \\
		 & \phantom{=\ }  +  \lduality{x'_{n+1}(t) - x'_n(t)}{x_{n+1}(\theta_{n+1}(t)) - x_{n}(\theta_{n}(t))} \notag  \\
		 & \phantom{=\ }  +  \norm{x'_{n+1}(t) - x'_n(t)}{}\norm{x_{n}(\theta_{n}(t)) - x_n(t)}{} \notag               \\
		 & \le (\norm{x'_{n+1}(t)}{} + \norm{x'_n(t)}{})(\eps_{n+1}+\eps_{n}) \notag                                   \\
		 & \phantom{=\ }  +  \lduality{x'_{n+1}(t) - x'_n(t)}{x_{n+1}(\theta_{n+1}(t)) - x_{n}(\theta_{n}(t))}, \notag
	\end{align}
	therefore
	\begin{align}
		 & 2\norm{x_{n+1}(t) - x_n(t)}{}^2 \notag                                                               \\
		 & =  \int_0^t  \lduality{x'_{n+1}(s) - x'_n(s)}{x_{n+1}(s) - x_n(s)} \de s \notag                      \\
		 & \le (\eps_{n+1}+\eps_{n})\int_0^t(\norm{x'_{n+1}(s)}{} + \norm{x'_n(s)}{}) \de s \notag              \\
		 & \phantom{\le\ } +
		\int_0^t \lduality{x'_{n+1}(s) - x'_n(s)}{x_{n+1}(\theta_{n+1}(s)) - x_{n}(\theta_{n}(s))} \de s \notag \\
		 & \le 4V\eps_{n} +
		\int_0^t \lduality{x'_{n+1}(s) - x'_n(s)}{x_{n+1}(\theta_{n+1}(s)) - x_{n}(\theta_{n}(s))} \de s.\label{d/dt <}
	\end{align}
	Before proceeding with the estimate of the right-hand side of \eqref{d/dt <}, let us preliminarily observe that  from \eqref{LT-xn-x} we obtain
	\begin{align}
		 & \norm{x_{n+1}(\theta_{n+1}(t)) - x_{n}(\theta_{n}(t))}{}^2 \notag \\
		 & \le  3\norm{x_{n+1}(\theta_{n+1}(t)) - x_{n+1}(t)}{}^2 +
		3\norm{x_{n+1}(t) - x_{n}(t)}{}^2 +
		3\norm{x_{n}(t) - x_{n}(\theta_{n}(t))}{}^2 \notag                   \\
		 & \le  3\eps_{n+1}^2 + 3\norm{x_{n+1}(t) - x_{n}(t)}{}^2 +
		3\eps_{n}^2 \le
		6\eps_{n}^2  + 3\norm{x_{n+1}(t) - x_{n}(t)}{}^2. \label{stima freq}
	\end{align}
	Now let $j, k \in \en$ be such that $j \in \{1,\ldots,J^n\}$, $k \in \{1,\ldots,J^{n+1}\}$, and
	\begin{equation}\label{tjtk}
		t_{j-1}^n \le t_{k-1}^{n+1} < t < t_k^{n+1} \le t_j^n,
	\end{equation}
	thus $\theta_{n+1}(t) = t_k^{n+1}$ and $x_n(t_{j-1}^n) = y_{j-1}^{n} \in \C(t_{j-1}^{n})$, so that thanks to \eqref{subd3} and \eqref{CforP},
	\begin{equation}
		\d(x_n(t_{j-1}^n),\C(\theta_{n+1}(t))) \le \e(\C(t_{j-1}^{n}), \C(t_k^{n+1})) \le \eps_n < r. \label{LT6}
	\end{equation}
	Therefore there exists $\P_{\C(\theta_{n+1}(t))}(x_n(t_{j-1}^n))$ and, as by \eqref{x_n' in N} we have that $-x'_{n+1}(t)$ $\in$ $N_{\C(\theta_{n+1}(t))}(x_{n+1}(\theta_{n+1}(t)))$, using
	the $r$-prox-regularity of $\C(\theta_{n+1}(t))$, we can write
	\begin{align}
		 & \lduality{x'_{n+1}(t)}{x_{n+1}(\theta_{n+1}(t)) - x_{n}(\theta_{n}(t))} \notag                                                \\
		 & =  \lduality{x'_{n+1}(t)}{x_{n+1}(\theta_{n+1}(t)) - \P_{\C(\theta_{n+1}(t))}(x_n(t_{j-1}^n))}  \notag                        \\
		 & \phantom{=\ } +
		\lduality{x'_{n+1}(t)}{\P_{\C(\theta_{n+1}(t))}(x_n(t_{j-1}^n)) - x_{n}(\theta_{n}(t))} \notag                                   \\
		 & \le \frac{\lnorm{x_{n+1}(\theta_{n+1}(t)) - \P_{\C(\theta_{n+1}(t))}(x_n(t_{j-1}^n))}{}^2}{2r} \notag                         \\
		 & \phantom{\le\ } + \norm{x'_{n+1}(t)}{}\lnorm{\P_{\C(\theta_{n+1}(t))}(x_n(t_{j-1}^n))) - x_{n}(\theta_{n}(t))}{}. \label{LT5}
	\end{align}
	In the following two computations we estimate the right-hand side of \eqref{LT5}.
	First we observe that from \eqref{x_n},\eqref{tetan}, \eqref{yj - yj-1}, \eqref{stima freq}, and \eqref{LT6}, we infer that
	\begin{align}
		 & \lnorm{x_{n+1}(\theta_{n+1}(t)) - \P_{\C(\theta_{n+1}(t))}(x_n(t_{j-1}^n))}{}^2 \notag \\
		 & \le 3\lnorm{x_{n+1}(\theta_{n+1}(t)) - x_n(\theta_n(t))}{}^2 \notag                    \\
		 & \phantom{\le \ } +  3\lnorm{x_n(\theta_n(t)) - x_n(t_{j-1}^n)}{}^2  +
		3\lnorm{x_n(t_{j-1}^n) - \P_{\C(\theta_{n+1}(t))}(x_n(t_{j-1}^n))}{}^2 \notag             \\
		 & = 3\lnorm{x_{n+1}(\theta_{n+1}(t)) - x_n(\theta_n(t))}{}^2 \notag                      \\
		 & \phantom{\le \ } +  3\lnorm{y_j^n - y_{j-1}^n}{}^2  +
		3\lnorm{x_n(t_{j-1}^n) - \P_{\C(\theta_{n+1}(t))}(x_n(t_{j-1}^n))}{}^2 \notag             \\
		 & \le 3\lnorm{x_{n+1}(\theta_{n+1}(t)) - x_n(\theta_n(t))}{}^2  + 3\eps_n^2  +
		3\d(x_n(t_{j-1}^n), \C(\theta_{n+1}(t)))^2\notag                                          \\
		 & \le 18\eps_n^2 + 9\norm{x_{n+1}(t) - x_{n}(t)}{}^2 +
		3\eps_n^2  +3\eps_n^2, \label{LT7}
	\end{align}
	and that
	\begin{align}
		 & \lnorm{\P_{\C(\theta_{n+1}(t))}(x_n(t_{j-1}^n))) - x_{n}(\theta_{n}(t))}{} \notag                 \\
		 & \le \lnorm{\P_{\C(\theta_{n+1}(t))}(x_n(t_{j-1}^n))) - x_n(t_{j-1}^n)}{} +
		\lnorm{x_n(t_{j-1}^n) - x_{n}(\theta_{n}(t))}{}  \notag                                              \\
		 & = \d(x_n(t_{j-1}^n), \C(\theta_{n+1}(t))) +                   \lnorm{y_{j-1}^n - y_j^n}{}  \notag \\
		 & \le   \eps_n +
		\eps_n  = 2\eps_n, \label{LT8}
	\end{align}
	therefore, from \eqref{LT5}, \eqref{LT7}, and \eqref{LT8} we obtain that there exists a constant $K_1 > 0$, independent of $n$, such that
	\begin{align}
		 & \int_0^t\lduality{x'_{n+1}(t)}{x_{n+1}(\theta_{n+1}(t)) - x_{n}(\theta_{n}(t))}\de s \notag \\
		 & \le  \frac{9}{2r}\int_0^t\lnorm{x_{n+1}(s) - x_n(s)}{}^2 \de s  +
		\int_0^tK_1(1+\norm{x'_{n+1}(s)}{})\eps_n \de s \notag                                         \\
		 & \le  \frac{9}{2r}\int_0^t\lnorm{x_{n+1}(s) - x_n(s)}{}^2 \de s  +
		K_1(T+V)\eps_n. \label{LT9}
	\end{align}
	Now, recalling \eqref{tjtk}, observe that $\theta_n(t) = t_j^n$ and
	$x_{n+1}(\theta_{n+1}(t)) = y_{k-1}^{n+1} \in \C(t_{k-1}^{n+1})$, thus, thanks to \eqref{subd3} and \eqref{CforP},
	\begin{equation}
		\d(x_{n+1}(\theta_{n+1}(t)),\C(\theta_{n}(t)))
		\le \e(\C(t_{k-1}^{n+1}), \C(t_j^n)) < \eps_n. \label{LT10}
	\end{equation}
	Therefore there exists $\P_{\C(\theta(t))}(x_{n+1}(\theta_{n+1}(t)))$ and, as
	$-x'_{n}(t) \in N_{\C(\theta_{n}(t))}(x_{n}(\theta_{n}(t)))$ by  \eqref{x_n' in N}, using the $r$-prox-regularity of $\C(\theta_{n}(t))$, we can write
	\begin{align}
		 & \lduality{- x'_n(t)}{x_{n+1}(\theta_{n+1}(t)) - x_{n}(\theta_{n}(t))} \notag                                   \\
		 & =  \lduality{- x'_n(t)}{x_{n+1}(\theta_{n+1}(t)) - \P_{\C(\theta_n(t))}(x_{n+1}(\theta_{n+1}(t)))} \notag      \\
		 & \phantom{=\ } +
		\lduality{- x'_n(t)}{\P_{\C(\theta_n(t))}(x_{n+1}(\theta_{n+1}(t))) - x_{n}(\theta_{n}(t))} \notag                \\
		 & \le \norm{x'_n(t)}{}\lnorm{x_{n+1}(\theta_{n+1}(t)) - \P_{\C(\theta_n(t))}(x_{n+1}(\theta_{n+1}(t)))}{} \notag \\
		 & \phantom{\le\ } +
		\frac{\lnorm{\P_{\C(\theta_n(t))}(x_{n+1}(\theta_{n+1}(t))) - x_{n}(\theta_{n}(t))}{}^2}{2r}. \label{LT11}
	\end{align}
	In order to estimate the right-hand side of \eqref{LT11} let us observe that, by \eqref{LT10}, we have
	\begin{equation}
		\lnorm{x_{n+1}(\theta_{n+1}(t)) - \P_{\C(\theta_n(t))}(x_{n+1}(\theta_{n+1}(t)))}{}
		= \d(x_{n+1}(\theta_{n+1}(t))), \C(\theta_{n}(t)))
		\le \eps_n, \label{LT12}
	\end{equation}
	and that, using also \eqref{stima freq},
	\begin{align}
		 & \lnorm{\P_{\C(\theta_n(t))}(x_{n+1}(\theta_{n+1}(t))) - x_{n}(\theta_{n}(t))}{}^2 \notag   \\
		 & \le 2\lnorm{\P_{\C(\theta_n(t))}(x_{n+1}(\theta_{n+1}(t))) - x_{n+1}(\theta_{n+1}(t))}{}^2
		+  2\lnorm{x_{n+1}(\theta_{n+1}(t)) - x_n(\theta_n(t))}{}^2
		\notag                                                                                        \\
		 & \le 2\eps_n + 12\eps_n^2 + 6\norm{x_{n+1}(t) - x_{n}(t)}{}^2,
		\label{LT13}
	\end{align}
	therefore, from \eqref{LT11}, \eqref{LT12}, and \eqref{LT13} we obtain that there exists a constant $K_2 > 0$, independent of $n$, such that
	\begin{align}
		 & \int_0^t \lduality{-x'_{n}(s)}{x_{n+1}(\theta_{n+1}(s)) - x_{n}(\theta_{n}(s))} \de s \notag \\
		 & \le \frac{3}{r}\int_0^t \lnorm{x_{n+1}(s) - x_n(s)}{}^2 \de s  +
		\int_0^t K_2(1 + \norm{x_n'(s)}{})\eps_n \de s \notag                                           \\
		 & \le \frac{3}{r}\int_0^t \lnorm{x_{n+1}(s) - x_n(s)}{}^2 \de s  +
		K_2(T + V)\eps_n. \label{LT14}
	\end{align}
	Collecting together \eqref{d/dt <}, \eqref{LT9} and \eqref{LT14} we infer that there exists a constant $K_3 > 0$, independent of $n$ (depending only on $T$ and $V$), such that
	\begin{equation}
		\norm{x_{n+1}(t) - x_n(t)}{}^2
		\le K_3\eps_n + \frac{9}{2r}\norm{x_{n+1}(t) - x_{n}(t)}{}^2. \label{LT15}
	\end{equation}
	By \eqref{LT15} and by the Grönwall lemma we infer that there is a constant $K(T,V,r) > 0$ independent of $n$ (depending only on $r$, $T$ and $V$), such that
	\[
		\norm{x_{n+1}(t) - x_n(t)}{}^2 \le K(T,V,r)\eps_n \qquad \forall t \in \clint{0,T}, \quad \forall n \in \en,
	\]
	so that
	\[
		\sum_{n=1}^\infty \sup_{t \in \clint{0,T}}\norm{x_{n+1}(t) - x_n(t)}{} < \infty
	\]
	and the sequence $x_n$ is uniformly Cauchy in $\Czero(\clint{0,T};\H)$. It follows that there exists a
	continuous function $y : \clsxint{0,\infty} \function \H$ such that
	\begin{equation}\label{xn->y}
		x_n \to y \quad \text{uniformly on $\clint{0,T}, \qquad \forall T > 0$},
	\end{equation}
	thus $y(0) = y_0$. From \eqref{xn->y} and from \eqref{xn-yn to 0} of Lemma \ref{exact discr.sol} we also deduce that
	\begin{equation}\label{yn>x}
		y_n \to y \quad \text{uniformly on $\clint{0,T}$}.
	\end{equation}
	The assumption that $\sup_{n \in \en}\V(y_n,\clint{0,T}) < \infty$ and the lower semicontinuity of the variation w.r.t.\ the pointwise convergence yield $y \in \BV(\clint{0,T};\H)$.
	Finally if $t \in \clint{0,T}$ and $t \in \clsxint{t_{j-1}^n, t_j^n}$ for some $j$, then we have
	\begin{align}
		\d(y_n(t),\C(t)) & = \d(y_{j-1}^n, \C(t))  \le  \e(\C(t_{j-1}^n),\C(t))  \le \eps_n,
	\end{align}
	therefore by \eqref{yn>x} and by the closedness of $\C(t)$ we infer that $y(t) \in \C(t)$ for every $t \in \clint{0,T}$.
\end{proof}

%%%%%%%%%%%%%%%%%%%%%%%%%%%%%%%%%%%%%%%%

\subsection{Uniform estimate of the variation}

This subsection is devoted to the uniform estimate of the variation of the approximated solutions.
We start by recalling the following lemma (cf. \cite[Lemma 2.2]{ColMon03}).

\begin{Lem}\label{L:TV-est}
	Assume that $m \in \en$, $r > 0$, and $\C_1, \ldots, \C_m$ are $r$-prox-regular subsets of $\H$ satisfying the inequalities
	\begin{equation}\label{hp1 TV-est}
		\e(\C_{k},\C_{k+1}) < r/2 \quad \forall k \in \{1,\ldots,m-1\},
	\end{equation}
	Moreover assume that there exist $w \in \H$ and $\rho > 0$ such that $B_\rho(w) \subseteq \C_k$ for all $k$.
	Let $y_0 \in \H$ be such that $\d_{\C_1}(y_0) < r/2$, and define
	\[
		y_k := \P_{\C_k}(y_{k-1}) \qquad k = 1,\ldots, m,
	\]
	\begin{equation}
		\alpha := \max\left\{\d_{\C_1}(y_0),\e(\C_1,\C_2),\ldots,\e(\C_{m-1 },\C_m)\right\} + \norm{y_0-w}{} + \rho.
	\end{equation}
	If
	\begin{equation}
		\alpha^2 < 2r\rho,
	\end{equation}
	then
	\begin{equation}\label{f-TV-est}
		\sum_{k=1}^m \norm{y_{k} - y_{k-1}}{} \le \max\left\{r\frac{\norm{y_0 - w}{}^2 - \rho^2}{2r\rho - \alpha^2}, 0\right\}.
	\end{equation}
\end{Lem}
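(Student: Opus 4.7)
My plan is to prove \eqref{f-TV-est} by a discrete energy argument relying on a single application of the prox-regularity characterization (Theorem \ref{charact proxreg}(iii)) at each step, together with a careful control of the distances $\gamma_k := \|y_k - w\|$. I will write $\beta_k := \|y_k - y_{k-1}\|$. The trivial case $\gamma_0 \le \rho$ is handled first: then $y_0$ lies in $\overline{B}_\rho(w) \subseteq \C_k$ for every $k$, so by Proposition \ref{1proj in seg} each projection fixes $y_{k-1}$, giving $\sum_k \beta_k = 0$ and matching the $\max$ on the right-hand side of \eqref{f-TV-est}.

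For the nontrivial case $\gamma_0 > \rho$, the key step is to test prox-regularity along the proximal normal direction itself. Fixing $k$ with $\beta_k > 0$, the vector $n_k := (y_{k-1} - y_k)/\beta_k$ is a unit proximal normal to $\C_k$ at $y_k$, since $y_{k-1} - y_k \in N_{\C_k}(y_k)$ has norm $\beta_k$, and because $\overline{B}_\rho(w) \subseteq \C_k$, the choice $z_k := w + \rho\, n_k$ lies in $\C_k$. Applying Theorem \ref{charact proxreg}(iii) with $z = z_k$, estimating $\|z_k - y_k\| \le \gamma_k + \rho$ by the triangle inequality, and using the identity $\gamma_{k-1}^2 - \gamma_k^2 = \beta_k^2 + 2\langle y_{k-1} - y_k,\, y_k - w\rangle$, I expect to obtain the recursion
\[
r(\gamma_{k-1}^2 - \gamma_k^2) \ge \beta_k\big[r\beta_k + 2r\rho - (\gamma_k + \rho)^2\big].
\]
As a complementary observation, projecting $y_{k-1}$ onto $\overline{B}_\rho(w) \subseteq \C_k$ yields $\beta_k \le \max(\gamma_{k-1} - \rho,\,0)$, whence $\gamma_k \ge \gamma_{k-1} - \beta_k \ge \rho$, and by induction $\gamma_k \ge \rho$ for every $k$.

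The main obstacle will be the inductive claim $\gamma_k + \rho \le \alpha$ for every $k$, which is what lets the recursion be converted into a telescoping estimate. The base case follows from the definition of $\alpha$. For the inductive step, the recursion immediately yields $\gamma_k \le \gamma_{k-1}$ whenever $(\gamma_k + \rho)^2 \le 2r\rho$, which already covers a useful regime since $\alpha^2 < 2r\rho$; the genuine difficulty is to preclude a ``crossing'' step where $\gamma_k$ jumps above $\sqrt{2r\rho} - \rho$. I plan to rule this out by combining the a priori estimate $\beta_k \le \alpha - \|y_0 - w\| - \rho$ (immediate from the definition of $\alpha$) with the triangle inequality $\gamma_k \le \gamma_{k-1} + \beta_k$, exploiting the full strength of the compatibility condition $\alpha^2 < 2r\rho$ to reach a contradiction at the first hypothetical crossing index. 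Once $\gamma_k + \rho \le \alpha$ is established, the recursion delivers $\beta_k(2r\rho - \alpha^2) \le r(\gamma_{k-1}^2 - \gamma_k^2)$, and telescoping from $k=1$ to $m$ together with the lower bound $\gamma_m \ge \rho$ concludes \eqref{f-TV-est}.
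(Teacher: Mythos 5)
The paper offers no proof of this lemma: it is recalled from \cite[Lemma~2.2]{ColMon03}, so there is no internal argument to compare against, and your proposal is essentially a reconstruction of the proof in that reference. The skeleton is sound: the recursion $r(\gamma_{k-1}^2-\gamma_k^2)\ge\beta_k\bigl[r\beta_k+2r\rho-(\gamma_k+\rho)^2\bigr]$ does follow from Theorem~\ref{charact proxreg}(iii) tested at $z_k=w+\rho n_k\in\overline{B}_\rho(w)\subseteq\C_k$ together with the identity $\gamma_{k-1}^2-\gamma_k^2=\beta_k^2+2\duality{y_{k-1}-y_k}{y_k-w}$; the bounds $\beta_k=\d_{\C_k}(y_{k-1})\le\max(\gamma_{k-1}-\rho,0)$ and $\gamma_k\ge\rho$ are correct; and once $\gamma_k+\rho\le\alpha$ is available, the telescoping does give \eqref{f-TV-est}.

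The one point you must make explicit is which invariant the induction carries. If you propagate only $\gamma_{k-1}+\rho\le\alpha$, i.e.\ $\gamma_{k-1}\le\gamma_0+\sigma$ with $\sigma:=\alpha-\norm{y_0-w}{}-\rho$, then at a hypothetical crossing the triangle inequality only yields $\gamma_k+\rho\le\alpha+\sigma$, and $\alpha+\sigma\le\sqrt{2r\rho}$ is \emph{not} implied by $\alpha^2<2r\rho$ (take $r=1$, $\rho=0.3$, $\gamma_0=0.31$, $\sigma=0.16$: then $\alpha=0.77$ and $\alpha^2=0.5929<0.6$, but $\alpha+\sigma=0.93>\sqrt{0.6}$), so no contradiction arises from the ingredients you list. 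The argument closes only because, up to the \emph{first} crossing index $k^*$, the recursion forces $\gamma_j\le\gamma_{j-1}$ for every $j<k^*$, hence $\gamma_{k^*-1}\le\gamma_0$ and therefore $\gamma_{k^*}+\rho\le\gamma_{k^*-1}+\beta_{k^*}+\rho\le\gamma_0+\sigma+\rho=\alpha<\sqrt{2r\rho}$, contradicting the definition of $k^*$. In other words the invariant you actually propagate is the monotonicity $\gamma_k\le\gamma_{k-1}$ (whence $\gamma_k\le\gamma_0$), not merely $\gamma_k+\rho\le\alpha$; please state this, since with the weaker invariant alone the step fails. A way to bypass the crossing analysis entirely is to estimate $\norm{z_k-y_k}{}\le\norm{z_k-y_{k-1}}{}+\beta_k\le\gamma_{k-1}+\rho+\beta_k$ instead of $\gamma_k+\rho$: then the hypothesis $\gamma_{k-1}\le\gamma_0$ gives $\gamma_{k-1}+\rho+\beta_k\le\alpha$ at once, the recursion yields $\gamma_k\le\gamma_{k-1}$ in one line, and the same telescoping concludes.
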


Then, following exactly the argument in \cite[p. 57]{ColMon03}, we can estimate the variation of the approximated solutions in the case when the moving set $\C(t)$ contains a fixed ball at every time $t$.

\begin{Lem}\label{TVest-ball}
	For every $t \in \clint{0,T}$ let $\C(t)$ be an $r$-prox-regular subset of $\H$, and assume that \eqref{e-cont->e-unifcont} holds. Moreover assume that
	\begin{equation}
		\exists w \in \H, \ \exists \rho > 0\ :\ \big[\,B_\rho(w) \subseteq \C(t)\  \forall t \in \clint{0,T}\,\big],
	\end{equation}
	and
	\begin{equation}\label{compcond}
		(\norm{y_0 - w}{} + \rho)^2 < 2r\rho.
	\end{equation}
	Let $(\eps_n)_{n \in \en}$ and $(\delta_n)_{n\in \en}$ be sequences in $\ar$, and $(\mathfrak{S}_n)_{n \in \en}$ be a sequence of subdivisions of $\clint{0,T}$, satisfying \eqref{epsn-1}, \eqref{deltan}--\eqref{CforP}, and \eqref{subd1}--\eqref{subd4}, and let $(y_j^n)_{j =0}^{J^n}$ is the family in $\H$ defined in
	\eqref{implicit scheme for Lip-sweeping processes1}--\eqref{implicit scheme for Lip-sweeping processes3} by virtue of Lemma \ref{catchup}. If for every $n \in \en$, we let $y_n : \clint{0,T} \function \H$
	be the discrete solution of Problem \ref{Pb}, i.e.\ the right continuous step function defined by \eqref{yn(0)}--\eqref{yn},
	then
	\begin{equation}
		\sup_{n \in \en}\V(y_n,\clint{0,T}) < \infty,
	\end{equation}
	and the supremum in the previous formula is depending on $r$, $y_0$, $w$, $\rho$, and on the continuity  modulus of $\C$.
\end{Lem}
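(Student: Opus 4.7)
The plan is to apply Lemma \ref{L:TV-est} directly to each discrete scheme $(y_j^n)_{j=0}^{J^n}$, which by \eqref{stepproj} coincides with the iterated projection sequence $y_k^n = \P_{\C(t_k^n)}(y_{k-1}^n)$ featured in that lemma, taking the sets $\C(t_k^n)$ as the $\C_k$. The first step is to verify its hypotheses uniformly in $n$: $r$-prox-regularity of each $\C(t_k^n)$ is given by assumption; condition \eqref{hp1 TV-est} follows from \eqref{CforP} as soon as $\eps_n < r/2$; the common ball $B_\rho(w) \subseteq \C(t_k^n)$ is assumed; and since $y_0 \in \C(0)$, \eqref{CforP} gives $\d_{\C(t_1^n)}(y_0) \le \e(\C(t_0^n),\C(t_1^n)) < \eps_n$.

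Substituting the excess bound of \eqref{CforP} into the definition of $\alpha$ in Lemma \ref{L:TV-est} yields the uniform estimate
\[
\alpha_n \le \eps_n + \norm{y_0 - w}{} + \rho.
\]
Since the strict inequality \eqref{compcond} reads $(\norm{y_0-w}{}+\rho)^2 < 2r\rho$, I can fix $\eps^* \in \opint{0,r/2}$, depending only on $r$, $\norm{y_0-w}{}$, and $\rho$, such that $(\eps^* + \norm{y_0-w}{}+\rho)^2 < 2r\rho$; as $\eps_n \to 0$ by \eqref{epsn-1}, there exists $n_0 \in \en$, determined by $(\eps_n)$ and hence by the continuity modulus of $\C$, with $\eps_n < \eps^*$ for every $n \ge n_0$. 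For such $n$, Lemma \ref{L:TV-est} gives at once
\[
\V(y_n,\clint{0,T}) = \sum_{k=1}^{J^n} \norm{y_k^n - y_{k-1}^n}{} \le \max\!\left\{r\,\frac{\norm{y_0-w}{}^2 - \rho^2}{2r\rho - (\eps^* + \norm{y_0-w}{}+\rho)^2},\,0\right\} =: M,
\]
a finite constant depending only on $r$, $y_0$, $w$, $\rho$, and the modulus of $\C$. For the finitely many remaining indices $n < n_0$, $\V(y_n,\clint{0,T})$ is a finite sum of norms and is therefore finite; taking the maximum of $M$ with these finitely many values yields $\sup_{n \in \en} \V(y_n,\clint{0,T}) < \infty$, as claimed.

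The only delicate step I expect is the uniform enforcement of the compatibility inequality $\alpha_n^2 < 2r\rho$ needed to invoke Lemma \ref{L:TV-est}; this is precisely what the strict inequality \eqref{compcond} is designed to accomplish, combined with the fact that $\eps_n \to 0$, and it is also the reason why one must first restrict to large $n$ before absorbing the finitely many initial indices into the supremum. Once this is settled, the argument reduces to a direct application of Lemma \ref{L:TV-est}, mirroring the computation in \cite[p.~57]{ColMon03}.
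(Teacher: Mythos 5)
Your proposal is correct and follows essentially the same route as the paper: both exploit the strict inequality \eqref{compcond} to find a threshold index beyond which $(\norm{y_0-w}{}+\rho+\eps_n)^2 < 2r\rho$ and $\eps_n < r/2$, and then apply Lemma \ref{L:TV-est} with $m=J^n$, $\C_k=\C(t_k^n)$ and $\alpha=\norm{y_0-w}{}+\rho+\sup_{n\ge n_0}\eps_n$ to bound $\V(y_n,\clint{0,T})=\sum_{j}\norm{y_j^n-y_{j-1}^n}{}$ uniformly. Your explicit treatment of the finitely many initial indices is a small addition not spelled out in the paper, but otherwise the arguments coincide.
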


\begin{proof}
	Thanks to \eqref{compcond} we can find $n_r \in \en$ such that
	\begin{equation}
		(\norm{y_0-w}{} + \rho + \eps_n)^2 < 2r\rho \qquad \forall n \in \en,\ n \ge n_r.
	\end{equation}
	\begin{equation}
		\eps_n < r/2 \qquad \forall n \in \en,\ n \ge n_r.
	\end{equation}
	Therefore we conclude by observing that $\V(y_n,\clint{0,T}) = \sum_{j=1}^{J^n}\norm{y_j^n - y_{j-1}^n}{}$ and by applying Lemma \ref{L:TV-est} with $m = J^n$, $\C_j = \C(t_j^n)$ for $1 \le j \le J^n$,
	$\alpha = \norm{y_0 -w}{} + \rho + \sup_{n \ge n_r}\eps_n$,
\end{proof}

The next lemma is a variant of \cite[Lemma 2.3]{ColMon03} which will allows us to get rid of the assumption (3) of \cite[Theorem 4.2]{ColMon03}: namely, thanks to next lemma, in our main theorems we will not need any continuity in time of $\partial\C$, neither w.r.t.\ the Hausdorff distance nor w.r.t.\ the excess.

\begin{Lem}\label{Lunifball}
	Let $T>0$ and $t_0\in[0,T)$.
				For every $t \in \clint{0,T}$ let $\C(t)$ be an $r$-prox-regular subset of $\H$ such that
				\begin{enumerate}
					\item \label{it:prox-reg} $\C(t)$ is $r$-prox-regular for every $t\in[0,T]$;
					\item  \label{it:exc} $\e\bigl(\C(t_0),\C(t)\bigr) \leq \omega(t-t_0)$ for every $t \in \clint{t_0,T}$,
					      with $\omega$ non-decreasing and such that $\lim\limits_{t\to0^+} \omega(t) = 0$;
					\item  \label{it:ball} there exist $x\in \H$ and $\rho_0>0$ such that $B_{\rho_0}(x)\subseteq \C(t_0)$.
				\end{enumerate}
				Then for every $\rho\in(0,\rho_0)$ there exists $\tau=\tau(\omega,r,\rho_0,\rho)\in(0,T]$, depending only on $\omega,r,\rho_0,\rho$ and independent of $x$ and $t_0$, such that $B_\rho(x)\subseteq \C(t)$ for every $t \in \clint{t_0,\min\{t_0+\tau,T\}}$.
\end{Lem}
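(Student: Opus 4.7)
The plan is a proof by contradiction, calibrating $\tau$ so that the one-sided retraction scale $\omega(\tau)$ is strictly smaller than two geometric scales: the prox-regularity radius $r/2$ and the margin $\sigma := \rho_0 - \rho$ between the two concentric balls. Using the monotonicity of $\omega$ together with $\omega(s) \to 0^+$, I would choose $\tau \in \opint{0,T}$ with $\omega(\tau) < \min\{r/2,\sigma\}$; this specifies $\tau$ as a function of $\omega, r, \rho_0, \rho$ only, with no role played by $x$ or $t_0$. For a fixed $t \in \clint{t_0,\min\{t_0+\tau,T\}}$, writing $\omega_t := \omega(t-t_0)$, I would then suppose toward a contradiction that some $y \in B_\rho(x)$ fails to lie in $\C(t)$.

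From $y \in B_{\rho_0}(x) \subseteq \C(t_0)$ and hypothesis (ii), the excess bound yields $\d(y,\C(t)) \le \omega_t < r$, so by Proposition \ref{1proj in seg} the projection $p := \P_{\C(t)}(y)$ is uniquely defined with $\norm{y-p}{} \le \omega_t$, and I would set $n := (y-p)/\norm{y-p}{}$. The defining property of $r$-prox-regularity then gives $p \in \Proj_{\C(t)}(p+rn)$, so the open ball $B_r(p+rn)$ is disjoint from $\C(t)$. The heart of the argument is to exhibit a single auxiliary point $z := y + sn$ lying simultaneously inside $B_{\rho_0}(x)$ (hence within $\omega_t$ of $\C(t)$ by the excess bound) and strictly inside the forbidden ball $B_r(p+rn)$. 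I would pick $s$ in the interval $\opint{\omega_t,\min\{r - \norm{y-p}{},\sigma\}}$, which is nonempty precisely because the calibration of $\tau$ gives $\omega_t < r/2$ (whence $\omega_t < r - \norm{y-p}{}$) and $\omega_t < \sigma$. A direct computation yields $\norm{z-(p+rn)}{} = r - \norm{y-p}{} - s$ and $\norm{z-x}{} < s + \rho < \rho_0$: the former, combined with the forbidden-ball property, gives $\d(z,\C(t)) \ge r - \norm{z-(p+rn)}{} = \norm{y-p}{} + s > s$, while the latter, combined with hypothesis (ii), gives $\d(z,\C(t)) \le \omega_t < s$. This is the desired contradiction.

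The main obstacle is precisely to ensure that the auxiliary parameter $s$ can be chosen larger than the error scale $\omega_t$ while keeping $z$ both inside the forbidden ball (the constraint $s < r - \norm{y-p}{}$, equivalent to roughly $\omega_t < r/2$) and inside the ambient ball $B_{\rho_0}(x)$ (the constraint $s < \sigma$). Once these two thresholds are built into the definition of $\tau$, the whole construction is purely local around $x$, relying only on the normalized geometry of the inclusion $B_{\rho_0}(x) \subseteq \C(t_0)$ and on the excess bound emanating from $\C(t_0)$; in particular, the independence of $\tau$ from the specific center $x$ and starting time $t_0$ is automatic.
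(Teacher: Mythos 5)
Your proof is correct, and it follows the same overall strategy as the paper's --- a contradiction obtained by producing an auxiliary point of $B_{\rho_0}(x)\subseteq\C(t_0)$ that prox-regularity forces to lie at definite distance from $\C(t)$, against the excess bound $\e(\C(t_0),\C(t))\le\omega(\tau)$ --- but the implementation of the key construction is genuinely different. The paper argues in two stages: it first shows $\d(x,\partial\C(t))\ge\rho$ by taking a hypothetical boundary point $y\in\partial\C(t)\cap B_\rho(x)$ and invoking an exterior ball $B_\eta(z)$ tangent to $\C(t)$ at $y$, and then separately checks $x\in\C(t)$ before combining the two facts. You instead take a hypothetical point $y\in B_\rho(x)\setminus\C(t)$ directly, project it to $p=\P_{\C(t)}(y)$, and place the auxiliary point $z=y+sn$ inside the exterior ball $B_r(p+rn)$ guaranteed by the very definition of $r$-prox-regularity. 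Your variant buys two things: the unit proximal normal $n=(y-p)/\norm{y-p}{}$ exists by construction, whereas the paper's step requires an exterior sphere at an \emph{arbitrary} boundary point of $\C(t)$, i.e.\ a nonzero proximal normal there, which is a slightly stronger (and, for general prox-regular sets in infinite dimensions, more delicate) property; and you need no separate argument that $x\in\C(t)$, since your contradiction covers at once the cases where $B_\rho(x)$ meets $\partial\C(t)$ and where it misses $\C(t)$ entirely. The calibration of $\tau$ is essentially the same in both proofs ($\omega(\tau)$ below both the prox-regularity scale and the annular margin $\rho_0-\rho$), and all your intermediate estimates ($\norm{y-p}{}\le\omega_t<r/2$, the nonemptiness of the interval for $s$, $\norm{z-x}{}<s+\rho<\rho_0$, and $\d(z,\C(t))\ge\norm{y-p}{}+s>s>\omega_t\ge\d(z,\C(t))$) check out.
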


\begin{proof}
	Given $\rho\in(0,\rho_0)$, define $\eta:=\eta(r,\rho_0,\rho):=\min\{\rho_0-\rho,r\}$ and let
	$\tau=\tau(\omega,r,\rho_0,\rho)>0$ be such that $\omega(\tau)<\min\{\eta,\rho\}$.
	Observe that $\tau$ is independent of $x$ and $t_0$.

	We start by proving that $\d\bigl(x,\partial \C(t)\bigr) \geq \rho$ for every $t\in[t_0,\min\{t_0+\tau,T\}]$.
	If it were not the case, we would have some $t\in[t_0,\min\{t_0+\tau,T\}]$ and $y\in\partial \C(t)$ with $\norm{y-z}{}<\rho$.
	Since $\C(t)$ is $r$-prox-regular, and $y$ belongs to its boundary,
	we can find $z\in \H \setmeno \C(t)$ such that $B_\eta(z)\subseteq \H \setmeno \C(t)$ and $\norm{z-y}{} = \eta < r$.
	\begin{figure}[ht]
		\centering
		% \newcommand\svgwidth{\textwidth}
		%% Creator: Inkscape 1.4.2 (ebf0e940d0, 2025-05-08), www.inkscape.org
%% PDF/EPS/PS + LaTeX output extension by Johan Engelen, 2010
%% Accompanies image file '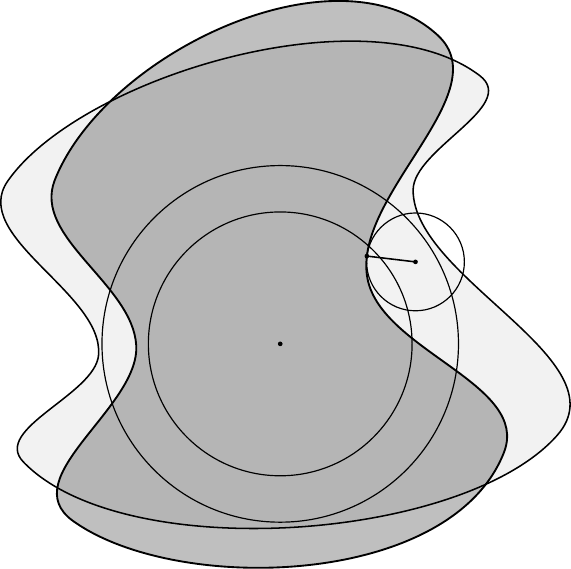' (pdf, eps, ps)
%%
%% To include the image in your LaTeX document, write
%%   \input{<filename>.pdf_tex}
%%  instead of
%%   \includegraphics{<filename>.pdf}
%% To scale the image, write
%%   \def\svgwidth{<desired width>}
%%   \input{<filename>.pdf_tex}
%%  instead of
%%   \includegraphics[width=<desired width>]{<filename>.pdf}
%%
%% Images with a different path to the parent latex file can
%% be accessed with the `import' package (which may need to be
%% installed) using
%%   \usepackage{import}
%% in the preamble, and then including the image with
%%   \import{<path to file>}{<filename>.pdf_tex}
%% Alternatively, one can specify
%%   \graphicspath{{<path to file>/}}
%% 
%% For more information, please see info/svg-inkscape on CTAN:
%%   http://tug.ctan.org/tex-archive/info/svg-inkscape
%%
\begingroup%
\makeatletter%
\providecommand\color[2][]{%
	\errmessage{(Inkscape) Color is used for the text in Inkscape, but the package 'color.sty' is not loaded}%
	\renewcommand\color[2][]{}%
}%
\providecommand\transparent[1]{%
	\errmessage{(Inkscape) Transparency is used (non-zero) for the text in Inkscape, but the package 'transparent.sty' is not loaded}%
	\renewcommand\transparent[1]{}%
}%
\providecommand\rotatebox[2]{#2}%
\newcommand*\fsize{\dimexpr\f@size pt\relax}%
\newcommand*\lineheight[1]{\fontsize{\fsize}{#1\fsize}\selectfont}%
\ifx\svgwidth\undefined%
	\setlength{\unitlength}{273.84028397bp}%
	\ifx\svgscale\undefined%
		\relax%
	\else%
		\setlength{\unitlength}{\unitlength * \real{\svgscale}}%
	\fi%
\else%
	\setlength{\unitlength}{\svgwidth}%
\fi%
\global\let\svgwidth\undefined%
\global\let\svgscale\undefined%
\makeatother%
\begin{picture}(1,0.99601562)%
	\lineheight{1}%
	\setlength\tabcolsep{0pt}%
	\put(0,0){\includegraphics[width=\unitlength,page=1]{inner_ball.pdf}}%
	\put(0.84326142,0.36580422){\makebox(0,0)[lt]{\lineheight{1.25}\smash{\begin{tabular}[t]{l}$C(0)$\end{tabular}}}}%
	\put(0.22700628,0.93256445){\makebox(0,0)[lt]{\lineheight{1.25}\smash{\begin{tabular}[t]{l}$C(t)$\end{tabular}}}}%
	\put(0.49874634,0.36562787){\makebox(0,0)[lt]{\lineheight{1.25}\smash{\begin{tabular}[t]{l}$x$\end{tabular}}}}%
	\put(0.60806448,0.53112465){\makebox(0,0)[lt]{\lineheight{1.25}\smash{\begin{tabular}[t]{l}$y$\end{tabular}}}}%
	\put(0.72052908,0.50606554){\makebox(0,0)[lt]{\lineheight{1.25}\smash{\begin{tabular}[t]{l}$z$\end{tabular}}}}%
	\put(0.38240247,0.10530114){\makebox(0,0)[lt]{\lineheight{1.25}\smash{\begin{tabular}[t]{l}$B_{\rho_0}(x)$\end{tabular}}}}%
	\put(0.39892885,0.18699939){\makebox(0,0)[lt]{\lineheight{1.25}\smash{\begin{tabular}[t]{l}$B_\rho(x)$\end{tabular}}}}%
	\put(0.78437346,0.60742162){\makebox(0,0)[lt]{\lineheight{1.25}\smash{\begin{tabular}[t]{l}$B_\eta(z)$\end{tabular}}}}%
\end{picture}%
\endgroup%

		\caption{Visual aid for the proof of Lemma~\ref{Lunifball}.}
		\label{fig:inner-ball}
	\end{figure}
	Figure \ref{fig:inner-ball} illustrates this situation.
	From
	\[
		\norm{z-x}{}
		\leq \norm{z-y}{}+\norm{y-x}{}
		< \eta + \rho \leq \rho_0-\rho+\rho = \rho_0
	\]
	and from \ref{it:ball} we deduce $z\in B_{\rho_0}(x)\subseteq \C(t_0)$; hence by \ref{it:exc}, the definition of $\tau$,
	and $\C(t)\subseteq \H \setmeno B_\eta(z)$ we have
	\[
		\eta > \omega(\tau) \geq \omega(t)
		\geq \e\bigl(\C(t_0),\C(t)\bigr) \geq \d\bigl(z,\C(t)\bigr)
		\geq \d\bigl(z,\H \setmeno B_\eta(z)\bigr) \geq \eta,
	\]
	which is impossible. This concludes the proof of the claim.

	Next we show that we have $x\in \C(t)$ for every $t\in[t_0,\min\{t_0+\tau,T\}]$.
	Indeed, if it were not the case, we would have some $t\in[t_0,\min\{t_0+\tau,T\}]$ such that $x\not\in \C(t)$.
	From \ref{it:exc}, the definition or $\tau$, and recalling that $x\in \C(t_0)$ by \ref{it:ball} we get
	\[
		\d\bigl(x,\C(t)\bigr) \leq \e\bigl(\C(t_0),\C(t)\bigr) \leq \omega(t) \leq \omega(\tau) < \rho,
	\]
	therefore $\d\bigl(x,\partial \C(t)\bigr) = \d\bigl(x,\C(t)\bigr) < \rho$,
	which is impossible because it violates the first claim.

	Finally, from $x\in \C(t)$ and $\d\bigl(x,\partial \C(t)\bigr)\geq\rho$ for every $t\in[t_0,\min\{t_0+\tau,T\}]$
	we deduce that $B_\rho(x)\subseteq \C(t)$ for every $t\in[t_0,\min\{t_0+\tau,T\}]$, and this finishes the proof.
\end{proof}

We are now ready to establish uniform estimates for the variation in the case when $\C(t)$ enjoys the uniform inner cone condition.

\begin{Thm}\label{TV con cond}
	For every $t \in \clint{0,T}$ let $\C(t)$ be an $r$-prox-regular subset of $\H$, and assume that \eqref{e-cont->e-unifcont} holds. Moreover assume that there exist $R > 0$ and $d > 0$ such that for every $t \in \clint{0,T}$
	\begin{equation}\label{unif. int. cone}
		\forall x \in \partial\C(t) \ \exists z \in \C(t) \cap B_d(x) \ :\
		(1-\lambda)x+\lambda B_R(z) \subseteq \C(t), \ \forall \lambda \in \clint{0,1},
	\end{equation}

	Let $(\eps_n)_{n \in \en}$ and $(\delta_n)_{n\in \en}$ be sequences in $\ar_+$
	satisfying \eqref{epsn-1}--\eqref{CforP},
	let $(\mathfrak{S}_n)_{n \in \en}$ be a sequence of subdivisions of $\clint{0,T}$
	satisfying \eqref{subd1}--\eqref{subd4},
	let $(y_j^n)_{j =0}^{J^n}$ be the family of points in $\H$ defined by
	\eqref{implicit scheme for Lip-sweeping processes1}--\eqref{implicit scheme for Lip-sweeping processes3}
	by virtue of Lemma~\ref{catchup},
	and let $y_n : \clint{0,T} \function \H$
	be the discrete solution of Problem \ref{Pb}, i.e.\ the right continuous step function
	defined by \eqref{yn(0)}--\eqref{yn} in Lemma~\ref{exact discr.sol}.
	Then
	\begin{equation}
		\sup_{n \in \en}\V(y_n,\clint{0,T}) < \infty.
	\end{equation}
\end{Thm}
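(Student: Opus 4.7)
The plan is to partition $\clint{0,T}$ into finitely many subintervals of a length $\tau>0$ chosen uniformly in $n$, and on each of them to apply Lemma~\ref{L:TV-est} to the relevant portion of the catching-up sequence $(y^n_j)_j$. Summing the per-subinterval bounds will yield the required uniform variation estimate.

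The first step is to exploit the interior cone condition \eqref{unif. int. cone} in a rescaled form: for every $t\in\clint{0,T}$, every $x\in\partial\C(t)$, and every $\lambda\in(0,1]$, if $z$ is as in \eqref{unif. int. cone} then
\[
  (1-\lambda)x + \lambda B_R(z) = B_{\lambda R}\bigl((1-\lambda)x + \lambda z\bigr) \subseteq \C(t),
\]
a ball of radius $\lambda R$ whose center lies within distance $\lambda d$ of $x$. I would fix $\lambda_0\in(0,1)$ so small that
\[
  \lambda_0^{2}(d+R)^{2} < r\,\lambda_0 R,
\]
i.e.\ $\lambda_0 < rR/(d+R)^2$, with a strict margin. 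Setting $\rho_0:=\lambda_0 R$ and $\rho:=\lambda_0 R/2$, Lemma~\ref{Lunifball} yields $\tau=\tau(\omega,r,\rho_0,\rho)>0$ such that any $B_{\rho_0}(w)\subseteq\C(\bar t)$ satisfies $B_\rho(w)\subseteq\C(t)$ for all $t\in\clint{\bar t,\min(\bar t+\tau,T)}$. I then partition $\clint{0,T}$ into $N:=\lceil T/\tau\rceil$ pieces with endpoints $s_k:=kT/N$, $k=0,\ldots,N$.

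On each subinterval $\clint{s_k,s_{k+1}}$ I would select a ball $B_\rho(w_k^n)\subseteq\C(t)$ valid throughout the interval and close to $y_n(s_k)$. If $\d\bigl(y_n(s_k),\partial\C(s_k)\bigr)\ge\rho$, set $w_k^n:=y_n(s_k)$. Otherwise, let $\hat y\in\partial\C(s_k)$ be a nearest boundary point to $y_n(s_k)$ and apply the scaled cone at $\hat y$ with parameter $\lambda_0$ to obtain $w_k^n$ with $\|\hat y - w_k^n\|\le\lambda_0 d$ and $B_{\rho_0}(w_k^n)\subseteq\C(s_k)$. In either case $\|y_n(s_k)-w_k^n\|\le\rho+\lambda_0 d$, and Lemma~\ref{Lunifball} propagates the ball to $B_\rho(w_k^n)\subseteq\C(t)$ for all $t\in\clint{s_k,s_{k+1}}$. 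I would then apply Lemma~\ref{L:TV-est} to the sub-sequence of catching-up iterates $y^n_j$ with $t^n_j\in\cldxint{s_k,s_{k+1}}$, starting from the value of $y_n$ just before $s_k$: for $n$ large enough that $\eps_n$ lies below the margin in the choice of $\lambda_0$, the compatibility $\alpha^2<2r\rho$ holds, yielding $\V(y_n,\clint{s_k,s_{k+1}})\le M$ with $M$ depending only on $r$, $R$, $d$, $\omega$. Summing over $k$ gives $\V(y_n,\clint{0,T})\le NM$, uniformly in $n$.

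The main technical obstacle is the uniform verification of the compatibility $\alpha^2<2r\rho$ at every $s_k$. Two issues must be handled: (i) the point $y_n(s_k)$ may lie either close to $\partial\C(s_k)$ or deep in its interior, which motivates the case split and the use of the scaled cone only in the near-boundary regime; (ii) since $s_k$ is generally not a discretization node, the sub-sequence starts from $y_n(s_k)\in\C(\sigma_k^n)$ for some $\sigma_k^n\in\clint{s_k-\delta_n,s_k}$, injecting an $O(\eps_n)$ perturbation into $\alpha$. Both issues are absorbed by the strict margin built into the choice $\lambda_0<rR/(d+R)^2$, which guarantees the compatibility for all sufficiently large $n$; the finitely many small-$n$ cases have trivially finite variation because $\V(y_n,\clint{0,T})=\sum_j\|y_j^n-y_{j-1}^n\|$ is a finite sum of real numbers for each fixed $n$.
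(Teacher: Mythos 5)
Your proposal follows essentially the same route as the paper: rescale the cone condition to produce a ball $B_{\lambda R}$ centred within $\lambda d$ of a boundary point, choose $\lambda$ small enough that the compatibility condition of Lemma~\ref{L:TV-est} holds, propagate the ball forward over a window of uniform length $\tau$ via Lemma~\ref{Lunifball}, and cover $\clint{0,T}$ by $O(T/\tau)$ such windows. The only substantive differences are cosmetic (you use a fixed grid $s_k=kT/N$ where the paper advances adaptively from the discretization nodes, and you split cases according to $\d(y_n(s_k),\partial\C(s_k))$ where the paper splits according to whether the node $y^n_i$ lies on $\partial\C(t^n_i)$ or in the interior, in the latter case observing that the first iterate that moves lands exactly on the boundary). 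One step as written does not go through: in your first case you set $w_k^n:=y_n(s_k)$ when $\d(y_n(s_k),\partial\C(s_k))\ge\rho$, which only gives $B_\rho(w_k^n)\subseteq\C(s_k)$, whereas Lemma~\ref{Lunifball} with your parameters $(\rho_0,\rho)=(\lambda_0R,\lambda_0R/2)$ requires a ball of radius $\rho_0$ at the initial time in order to guarantee a ball of radius $\rho$ on the window. You must either raise the threshold of the case split to $\rho_0$ (which changes the bound $\lVert y_n(s_k)-w_k^n\rVert\le\lambda_0(R+d)$ in the second case and forces the stronger choice $\lambda_0(d+3R/2)^2<rR$, still satisfiable) or invoke Lemma~\ref{Lunifball} a second time with the pair $(\rho,\rho/2)$ and run Lemma~\ref{L:TV-est} with radius $\rho/2$. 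Either repair is routine and does not affect the overall structure; the rest of the argument, including the treatment of the $O(\eps_n)$ perturbations and of the finitely many small $n$, is sound.
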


\begin{proof}
	Let us first choose $\lambda \in \opint{0,1}$ such that $(\lambda d+\lambda R/2)^2<2r(\lambda R/2)$,
	i.e. $\lambda(d+R/2)^2 < rR$.
	Now let $\tau = \tau(\omega,r,\lambda R,\lambda R/2)$ be the positive number obtained by applying
	Lemma~\ref{Lunifball} with $\omega(\delta) = \sup\{\e(\C(s),\C(t))\ :\ 0 \le t-s \le \delta\}$ being
	the continuity modulus of $\C$, $\rho_0=\lambda R$ and $\rho=\lambda R/2$.
	Let $\bar n\in\en$ be such that $\eps_{\bar n} < r/2$, $\delta_{\bar n}<\tau/2$ and
	$(\lambda d+\lambda R/2+\eps_{\bar n})^2<2r(\lambda R/2)$.

	Fix $n\in\en$ with $n\geq\bar n$. Given $i\in\{0,\dots,J^n-1\}$, consider the point $y^n_i\in\C(t^n_i)$;
	there are two cases: either $y^n_i\in\partial\C(t^n_i)$, or $y^n_i\in\Int(\C(t^n_i))$.
	%$y^n_i\in\C(t^n_i)^\circ$.
	\begin{itemize}
		\item Assume $y^n_i\in\partial\C(t^n_i)$.
		      Since the steps of the partition are shorter than $\delta_{\bar n}<\tau/2$, we can find an index
		      $j\in\{i+1,\dots,J^n\}$ such that $t^n_j \in \clint{\min\{t^n_i+\tau/2,T\},t^n_i+\tau}$.
		      By the assumption \eqref{unif. int. cone} there is $z^n_i\in\C(t^n_i)\cap B_d(y^n_i)$ such that,
		      letting $w_i^n=(1-\lambda)y^n_i+\lambda z^n_i$, we have
		      $B_{\lambda R}(w_i^n)\subseteq\C(t^n_i)$,
		      hence from condition \eqref{e-cont->e-unifcont}, Lemma~\ref{Lunifball}, and our choice of $\tau$,
		      we deduce that $B_{\lambda R/2}(w_i^n)\subseteq\C(t)$ for every $t\in\clint{t^n_i,t^n_j}$.
		      Applying Lemma~\ref{L:TV-est} with
		      $\C_k = \C(t_k^n)$ for $k=i, \ldots, j$, $y_0 = y_i^n$, $w = w_i^n$, $\rho = \lambda R /2$, and $\alpha=\lambda d+\lambda R/2+\eps_{\bar n},$ we deduce
		      \[
			      \V\bigl(y_n,\clint{t^n_i,t^n_j}\bigr) \leq r \frac{d^2-(\lambda R/2)^2}{\lambda rR-\alpha^2}.
		      \]
		      % (observe that $d^2\ge (\lambda R/2)^2$ by the geometrical assumption \eqref{unif. int. cone}).
		      Observe that the numerator is positive because
		      $\lambda R/2=\rho\leq\lVert{w^n_i-y^n_i}\rVert\leq\lVert{z^n_i-y^n_i}\rVert\leq d$
		      and the denominator is positive too because by the choice of $\lambda$ and $\bar n$
		      we have $\alpha^2<2r(\lambda R/2)=\lambda rR$; this is the reason why in the previous
		      estimate of the variation we discarded the $\max$ resulting from \eqref{f-TV-est}.
		\item If $y^n_i\in\Int(\C(t^n_i))$, then there are two possible situations: either $y^n_j=y^n_i$
		      for every $j\in\{i,\dots,J^n\}$, in which case $\V(y_n,\clint{t^n_i,T})=0$, or there exists
		      $\bar\imath\in\{i+1,\dots,J^n\}$ such that $y^n_{\bar\imath}\neq y^n_i$ and $y^n_k=y^n_i$
		      for every $k\in\{i,\dots,\bar\imath-1\}$.

		      In this latter situation we have
		      $\norm{y^n_{\bar\imath}-y^n_{\bar\imath-1}}{}
			      \leq \e\bigl(\C(t^n_{\bar\imath-1}),\C(t^n_{\bar\imath})\bigr)
			      \leq \eps_n \leq \eps_{\bar n}$,
		      from which $\V(y_n,\clint{t^n_i,t^n_{\bar\imath}})\leq\eps_{\bar n}$.
		      The point $y^n_{\bar\imath-1}$ cannot belong to $\C(t^n_{\bar\imath})$, otherwise we would have
		      $y^n_{\bar\imath} = \P_{\C(t^n_{\bar\imath})}(y^n_{\bar\imath-1}) = y^n_{\bar\imath-1}$.
		      From this we deduce that $y^n_{\bar\imath}\in\partial\C(t^n_{\bar\imath})$, because if it were
		      in the interior it could not be the point minimizing the distance from $y^n_{\bar\imath-1}$.

		      We can now replicate the argument of the former case and deduce that there exists an index
		      $j\in\{\bar\imath+1,\dots,J^n\}$ such that
		      $t^n_j\in\clint{\min\{t^n_{\bar\imath}+\tau/2,T\},t^n_{\bar\imath}+\tau}$ and
		      \[
			      \V\bigl(y_n,\clint{t^n_{\bar\imath},t^n_j}\bigr) \leq r \frac{d^2-(\lambda R/2)^2}{\lambda rR-\alpha^2}.
		      \]
	\end{itemize}
	In both cases we were able to find $t^n_j\geq\min\{t^n_i+\tau/2,T\}$ such that
	\[
		\V\bigl(y_n,\clint{t^n_i,t^n_j}\bigr)
		\leq r \frac{d^2-(\lambda R/2)^2}{\lambda rR-\alpha^2} + \eps_{\bar n},
	\]
	therefore it takes at most $\lceil T /(\tau/2)\rceil$ steps to cover the entire interval
	$\clint{0,T}$ and we find
	\[
		\V(y_n,\clint{0,T})
		\leq \lceil2T/\tau\rceil \left(r \frac{d^2-(\lambda R/2)^2}{\lambda rR-\alpha^2} + \eps_{\bar n}\right).
		\qedhere
	\]
\end{proof}

%%%%%%%%%%%%%%%%%%%%%%%%%%%%%%%%%%%%%%%%

\subsection{Limit procedure}

In this final subsection we perform the limit procedure needed to recover a solution of problem \ref{Pb}.
We start with the following convergence lemma on differential measures.

\begin{Lem}
	If $g_n, g, h \in BV(\clint{0,T};\H)$ for every $n \in \en$, $h$ is right-continuous, $g_n \to g$ uniformly on $\clint{0,T}$,
	and $\sup_{n \in \en}\V(g_n,\clint{0,T})$ $<$ $\infty$, then
	\begin{equation}\label{gn->g debst}
		\lim_{n \to \infty}\int_{\clint{0,T}} \duality{h(t)}{\de\D g_n(t)} = \int_{\clint{0,T}} \duality{h(t)}{\de\D g(t)}.
	\end{equation}
\end{Lem}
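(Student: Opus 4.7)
The plan is to reduce to the case where $h$ is a right-continuous step function and then pass to the limit explicitly on such step functions. By hypothesis $\sup_n \V(g_n,\clint{0,T}) =: M < \infty$, and by lower semicontinuity of the variation under pointwise convergence also $\V(g,\clint{0,T}) \le M$; hence formula \eqref{D-TV-pV} gives $\vartot{\D g_n}(\clint{0,T})\le M$ and $\vartot{\D g}(\clint{0,T})\le M$. In particular, for any bounded Borel map $\psi : \clint{0,T} \function \H$ one has $\big|\int \duality{\psi}{\de\D g_n}\big| \le M\|\psi\|_\infty$, and likewise for $\D g$, so a uniform approximation of $h$ will suffice.

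Since $h \in \BV(\clint{0,T};\H)$ it is regulated, and being right-continuous it can be uniformly approximated with error $\le 1/k$ by right-continuous step functions of the form
\[
h_k = \sum_{j=1}^{N_k} a_j^k\,\indicator_{\clsxint{s_{j-1}^k,s_j^k}} + b_k\,\indicator_{\{T\}},
\]
on a partition $0 = s_0^k < s_1^k < \cdots < s_{N_k}^k = T$, using the standard fact that a right-continuous regulated function is a uniform limit of step functions adapted to its large jumps. Applying Theorem~\ref{existence of Stietjes measure} to compute the differential measure on half-open intervals and on singletons reduces the integral of $h_k$ against $\D g_n$ to the finite sum
\[
\int_{\clint{0,T}} \lduality{h_k}{\de\D g_n} = \sum_{j=1}^{N_k} \lduality{a_j^k}{g_n(s_j^k-) - g_n(s_{j-1}^k-)} + \lduality{b_k}{g_n(T) - g_n(T-)}.
\]

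To pass to the limit in each term, note that uniform convergence $g_n \to g$ on $\clint{0,T}$ implies $g_n(t\pm) \to g(t\pm)$ pointwise for every $t$: since all functions involved are BV and hence have one-sided limits, letting $s \to t^\pm$ in $\|g_n(s) - g(s)\| \le \|g_n - g\|_\infty$ yields $\|g_n(t\pm) - g(t\pm)\| \le \|g_n - g\|_\infty \to 0$. Consequently, for each fixed $k$, $\int \duality{h_k}{\de\D g_n} \to \int \duality{h_k}{\de\D g}$ as $n \to \infty$. Combining with the triangle inequality
\[
\left|\int_{\clint{0,T}} \lduality{h}{\de\D g_n} - \int_{\clint{0,T}} \lduality{h}{\de\D g}\right| \le \frac{2M}{k} + \left|\int_{\clint{0,T}} \lduality{h_k}{\de\D g_n} - \int_{\clint{0,T}} \lduality{h_k}{\de\D g}\right|,
\]
and letting first $n \to \infty$ and then $k \to \infty$ concludes the proof.

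The only mild technicality I expect is the construction of the uniform step-function approximations $h_k$ (exploiting that the large-jump set of a regulated function is finite, so the partition can be made to capture all $\varepsilon$-jumps of $h$); everything else is elementary bookkeeping, the actual analytic input being the trivial but crucial fact that uniform convergence on an interval transfers to pointwise convergence of the one-sided limits.
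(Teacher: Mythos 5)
Your proof is correct and follows essentially the same route as the paper: uniform approximation of the right-continuous $\BV$ function $h$ by step functions, the uniform variation bound to control the approximation error uniformly in $n$, and explicit evaluation of the step-function integrals via the Lebesgue--Stieltjes measure of half-open intervals. The only cosmetic difference is that the paper first normalizes to $g=0$ and bounds $\norm{g_n}{\infty}$ directly, whereas you keep a general $g$ and pass to the limit in the one-sided limits $g_n(t\pm)\to g(t\pm)$; both arguments are equally valid.
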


\begin{proof}
	Let us observe that we can assume without loss of generality that $g = 0$ and that $h$ and every $g_n$ are continuous at $T$. Set $V :=  \sup_{n \in \mathbb{N}} \mathrm{V}(g_n, [0,T]) < \infty$, and let $\eps > 0$ be arbitrarily fixed. Let $h_\eps = \sum_{j=1}^{m_\eps} h_\eps^j \indicator_{[t_{j-1},t_j[}$  be a step map with respect to the algebra of subintervals of $[0,T]$ such that $\|h_\eps - h\|_\infty < \eps(2V)^{-1}$, and let  $n_\eps \in \mathbb{N}$ be such that
	$\|g_n\|_\infty < \eps(4m_\eps  \|h_\eps\|_\infty)^{-1} $ for every $n \ge n_\eps$. Then for every $n \ge n_\eps$ we have that
	\begin{align}
		\left|\int_{[0,T]}h\de \D g_n \right|
		 & \le \int_{[0,T]}|h - h_\eps|\de |\D g_n| + \left|\int_{[0,T]}h_\eps\de \D g_n\right| \notag \\
		 & \le \|h_\eps - h\|_\infty \sup_{n \in \mathbb{N}_+} \mathrm{V}(g_n, [0,T]) +
		\sum_{j=1}^{m_\eps} \norm{h_\eps^j}{} \norm{g_n(t_j) - g_n(t_{j-1}-)}{}\notag                  \\
		 & \le \eps + 2m_\eps \|h_\eps\|_\infty \|g_n\|_\infty < \eps. \notag
	\end{align}
\end{proof}

We can show the limit procedure.

\begin{Thm}\label{limit}
	For every $t \in \clint{0,T}$ let $\C(t)$ be an $r$-prox-regular subset of $\H$, and assume that \eqref{e-cont->e-unifcont} holds. Fix $y_0 \in \C(0)$.
	Let  $(\eps_n)_{n\in \en}$ be a sequence in $\ar$ satisfying \eqref{epsn-2}. Let
	$(\delta_n)_{n\in \en}$ be a sequence in $\ar$, and
	$(\mathfrak{S}_n)_{n \in \en}$ be a sequence of subdivisions of $\clint{0,T}$, satisfying \eqref{deltan}--\eqref{CforP}, and \eqref{subd1}--\eqref{subd4}, and let $(y_j^n)_{j =0}^{J^n}$ is the family in
	$\H$ defined in
	\eqref{implicit scheme for Lip-sweeping processes1}--\eqref{implicit scheme for Lip-sweeping processes3} by virtue of Lemma \ref{catchup}. For every $n \in \en$, we let $y_n : \clint{0,T} \function \H$
	be the discrete solution of Problem \ref{Pb}, i.e. the right continuous step function defined by \eqref{yn(0)}--\eqref{yn},
	assume that $\sup_{n \in \en}\V(y_m,\clint{0,T}) < \infty$, and let $y \in \BV(\clint{0,T};\H) \cap \Czero(\clint{0,T};\H)$ be the uniform limit of $y_n$, which exists by Theorem
	\ref{T:unif conv}. Then $y$ is a solution of Problem \ref{Pb}.
\end{Thm}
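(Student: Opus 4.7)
The plan is to pass to the limit in the discrete inclusion $v_n(t)\in-N_{\C^n(t)}(y_n(t))$ by exploiting the quantitative normal-cone characterization of Theorem~\ref{charact proxreg}(iii), combined with the convergences already established for the approximations $y_n$. First I would construct the reference measure $\mu$ and its density $v$: since $\sup_n\V(y_n,\clint{0,T})<\infty$, the scalar Borel measures $\vartot{\D y_n}$ are equibounded in total variation and, up to a subsequence, converge weakly-$*$ to a finite positive measure $\mu$ on $\clint{0,T}$. For every continuous $h\colon\clint{0,T}\function\H$, the convergence lemma stated just before the theorem gives $\int h\,\de\D y_n\to\int h\,\de\D y$, while the bound $\bigl|\int h\,\de\D y_n\bigr|\le\int\norm{h}{}\,\de\vartot{\D y_n}$ passes to the limit and yields $\bigl|\int h\,\de\D y\bigr|\le\int\norm{h}{}\,\de\mu$; hence $\D y$ is absolutely continuous with respect to $\mu$, and Radon--Nikodym provides $v\in\L^1(\mu;\H)$ with $\D y=v\mu$ and $\norm{v(t)}{}\le1$ $\mu$-a.e.

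The second step is to pass to the limit in the cone inequality. From $v_n(t)\in-N_{\C^n(t)}(y_n(t))$ and Theorem~\ref{charact proxreg}(iii) one obtains, for every $z\in\C^n(t)$,
\[
\lduality{y_n(t)-z}{v_n(t)}\le\frac{\norm{v_n(t)}{}}{2r}\norm{z-y_n(t)}{}^2.
\]
Given any continuous $\phi\colon\clint{0,T}\function\H$ with $\phi(t)\in\C(t)$ for all $t$, the shifted map $\phi_n(t):=\phi(t^n_{j-1})$ on $\clsxint{t^n_{j-1},t^n_j}$ satisfies $\phi_n(t)\in\C(t^n_{j-1})=\C^n(t)$ and converges uniformly to $\phi$ by continuity of $\phi$. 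Substituting $z=\phi_n(t)$, multiplying by a nonnegative continuous scalar $\psi$, and integrating with respect to $\mu_n$ (using $v_n\mu_n=\D y_n$ and $\norm{v_n}{}\mu_n=\vartot{\D y_n}$), and then letting $n\to\infty$ (uniform convergence of $y_n-\phi_n$ together with the convergence lemma for the left-hand side, and weak-$*$ convergence $\vartot{\D y_n}\to\mu$ combined with uniform convergence of $\norm{\phi_n-y_n}{}^2$ for the right-hand side) produces
\[
\int_0^T\psi(t)\lduality{y(t)-\phi(t)}{v(t)}\,\de\mu(t)\le\frac{1}{2r}\int_0^T\psi(t)\norm{\phi(t)-y(t)}{}^2\,\de\mu(t).
\]

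The last step is localization. For each $w_0\in\C(t_0)$ I would build a continuous selection $\phi$ with $\phi(t_0)=w_0$ and $\phi(t)\in\C(t)$ on a right neighborhood of $t_0$ by setting $\phi(t):=\P_{\C(t)}(w_0)$ for $t\ge t_0$ sufficiently close to $t_0$; this is well defined because excess-continuity forces $\d(w_0,\C(t))\le\e(\C(t_0),\C(t))<r$, and $\C(t)$ being $r$-prox-regular then yields a unique, continuous projection. Choosing $\psi$ concentrated near $t_0$ in the integrated inequality delivers the pointwise estimate $\lduality{v(t_0)}{y(t_0)-w_0}\le\frac{1}{2r}\norm{w_0-y(t_0)}{}^2$ for $\mu$-a.e.\ $t_0$; a separability argument over $w_0\in\C(t_0)$, together with the joint continuity of both sides in $w_0$, extends this simultaneously to every $w_0\in\C(t_0)$, and Proposition~\ref{propsigma} concludes $v(t_0)\in-N_{\C(t_0)}(y(t_0))$ $\mu$-a.e., completing the proof.

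The main obstacle will be this last step: because the excess is asymmetric, a continuous selection $\phi(t)\in\C(t)$ through a prescribed point $w_0\in\C(t_0)$ is only guaranteed for $t\ge t_0$. One-sided information nevertheless suffices, since $y$ is continuous and hence $\mu$ is atomless, so the pointwise inequality can be recovered at $\mu$-Lebesgue points using only right-sided test functions.
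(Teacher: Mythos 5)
Your overall architecture --- weak-$*$ limit $\mu$ of $\vartot{\D y_n}$, absolute continuity $\D y\ll\mu$ via the convergence lemma, an integrated cone inequality tested against a continuous selection, and localization at $\mu$-Lebesgue points through Proposition~\ref{propsigma} --- is the same as the paper's, and those parts are essentially sound. The genuine gap is the construction of the selection. You set $\phi(t):=\P_{\C(t)}(w_0)$ for $t\ge t_0$ and assert that prox-regularity ``yields a unique, continuous projection''. Uniqueness for each fixed $t$ does follow from $\d(w_0,\C(t))\le\e(\C(t_0),\C(t))<r$, but continuity in $t$ fails: condition \eqref{e-cont->e-unifcont} only prevents $\C(t)$ from retracting quickly, while it allows $\C(t)$ to expand abruptly, and when part of the set suddenly moves closer to $w_0$ the nearest point jumps. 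A trivial counterexample: in $\H=\ar^2$ take the half-planes $\C(t)=\{(x_1,x_2):x_2\le-\gamma(t)\}$ with $\gamma$ continuous increasing from $0$ to $\rho\in\opint{0,r}$ on $\clint{0,t_1}$ and $\gamma\equiv 0$ on $\cldxint{t_1,T}$. Then $\e(\C(s),\C(t))=\max\{0,\gamma(t)-\gamma(s)\}$ for $s\le t$, so \eqref{e-cont->e-unifcont} holds; every $\C(t)$ is convex, hence $r$-prox-regular for every $r$; $w_0=(0,0)\in\C(0)$; yet $\P_{\C(t)}(w_0)=(0,-\gamma(t))$ jumps by $\rho$ at $t_1^+$. (Taking instead $\gamma\ge 0$ continuous with unbounded variation, which is still compatible with \eqref{e-cont->e-unifcont}, the selection is not even $\BV$.) Consequently your claims that $\phi_n\to\phi$ uniformly and that the convergence lemma applies to $h=y-\phi$ break down, so the integrated inequality cannot be derived this way.

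This is exactly where the paper does something different, and it is the one new idea in this proof: given $\tau$ and $z_\tau\in\C(\tau)$, it reruns the catching-up algorithm starting from $z_\tau$ at time $\tau$ (applying Theorem~\ref{T:unif conv} to the rescaled family $t\mapsto\C(\tau+(T-\tau)t/T)$), and the \emph{limit of that scheme} is the test selection: a continuous $\BV$ curve $z$ with $z(\tau)=z_\tau$ and $z(t)\in\C(t)$ for $t\ge\tau$. Substituting this construction for your $\phi$, the rest of your argument goes through. Two smaller remarks: (i) ``$y$ continuous hence $\mu$ atomless'' is a non sequitur, since $\mu$ only dominates $\vartot{\D y}$ and may carry atoms produced by cancellations in $\D y_n$ (this happens to be harmless for the right-sided differentiation, but the justification is wrong); (ii) the ``separability argument over $w_0\in\C(t_0)$'' is delicate because the admissible $w_0$ vary with $t_0$ and the exceptional null set a priori depends on $w_0$; the cleaner route, which the paper takes, is to fix once and for all the full-$\mu$-measure set of Lebesgue points of $v$ and observe that, $y$ and the selection being continuous, every such point is automatically a Lebesgue point of $s\longmapsto\duality{y(s)-z(s)}{v(s)}$ for every admissible selection.
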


\begin{proof}
	Let $(y_n)$ be the sequence defined in \eqref{yn(0)}-\eqref{yn} of Theorem \ref{T:unif conv} and let $y \in \BV(\clint{0,T};\H) \cap \Czero(\clint{0,T};\H)$ be its uniform limit.
	From
	\eqref{implicit scheme for Lip-sweeping processes3} it follows that at every step $n$ we have that
	\begin{equation}\label{Dy_j}
		-\D y_n(\{t_j^n\}) = y_j^n - y_{j-1}^n \in N_{\C(t_j^n)}(y_j^n) = N_{\C(t_j^n)}(y_n(t_j^n)),
	\end{equation}
	and we also trivially have that
	\begin{equation}\label{DI_j}
		\D y_n(\opint{t_{j-1}^n,t_j^n}) = 0 \in N_{\C(t)}(y_n(s)) \qquad
		\forall s \in \opint{t_{j-1}^n,t_j^n}
	\end{equation}
	for every $j$.
	Let also consider the continuous interpolation $x_n$ defined in \eqref{x_n}.
	Now fix $\tau \in \clsxint{0,T}$ and $z_\tau \in \C(\tau)$ arbitrarily. By applying Theorem \ref{T:unif conv} with $y_0$ and $\C(t)$ replaced respectively by $z_\tau$ and $\C(\tau + (T-\tau)t/T)$, we infer the existence of a continuous function $\widetilde{z} : [0,T] \function \H$ such that $\widetilde{z}(0) = z_\tau$
	and $\widetilde{z}(t) \in \C(t)$ for all $t \in \clint{0,T}$. Therefore if we define $z : \clint{\tau,T} \function \H$ by
	\[
		z(s) := \widetilde{z}\left(\frac{T}{T-t}(s-\tau)\right) \qquad s \in \clint{\tau,T},
	\]
	then $z$ is continuous, $z(\tau) = z_\tau$, and $z(t) \in \C(t)$ for every $t \in \clint{t,T}$. Therefore from \eqref{Dy_j}, \eqref{DI_j}
	we infer that
	\begin{equation}
		\duality{-\D y_n(\{s\})}{z_s - y_n(s)} \le \frac{1}{2r}\norm{\D y_n(s)}{}\norm{z_s - y_n(s)}{}^2 \qquad
		\forall z_s \in \C(s),\   \forall s \in \clint{0,T}.
	\end{equation}
	It follows that for every $h > 0$ small enough we have
	\begin{equation}\label{discr int form}
		\int_{\clint{t,t+h}} \duality{y_n(s) - z(s)}{\de \D y_n(s)} \le
		\frac{1}{2r}\int_{\clint{t,t+h}} \norm{z(s) - y_n(s)}{}^2\de \vartot{\D y_n}(s)
	\end{equation}
	By the boundedness of $V(y_n,\clint{0,T})$ we have that $\vartot{\D y_n}(\clint{0,T})$ is  bounded, hence there exists a positive Borel measure $\mu : \borel(\clint{0,T}) \function \clsxint{0,\infty}$ such that $\vartot{\D y_n} \convergedebstar \mu$
	at least for a subsequence which we dot relabel. Therefore taking the limit in \eqref{discr int form} as $n \to \infty$, by the continuity of the integrands, and using the fact that $x_n - y_n \to 0$ uniformly, we infer that
	\begin{equation}
		\int_{\clint{t,t+h}} \duality{y(s) - z(s)}{\de \D y(s)} \le \frac{1}{2r}\int_{\clint{t,t+h}} \norm{z(s) - y(s)}{}^2\de \mu(s)
	\end{equation}
	%\noindent
	%\texttt{Prova di $|\D y| < \mu$ utilizzando la caratterizzazione della var tot con le funzioni continue:}
	%Let $I$ be an open interval in $\clint{0,T}$.
	%For every $\eps > 0$ there exists a continuous function $\phi : \clint{0,T} \function \H$ with compact support in $A$ such that
	%$\norm{\phi}{\infty} \le 1$ and
	%\begin{align}
	%		|\D y|(A)
	%		 & \le \int_{\clint{0,T}}\duality{\phi}{\de\D y} + \eps =
	%		\lim_{n \to \infty} \int_{\clint{0,T}} \duality{\phi}{\de\D y_n} + \eps \notag \\
	%		 & \le  \lim_{n \to \infty} \int_{\clint{0,T}} |\phi| \de |\D y_n| +\eps =
	%		\int_{\clint{0,T}} |\phi| \de \mu  + \eps \notag                               \\
	%		 & \le  \int_{\clint{0,T}} \de \mu + \eps = \mu(A) + \eps,
	%	\end{align}
	For every $k \in \en$ set $I_k := \{t \in I : \d(t,\partial I) > 1/k\}$ and let $\phi_k \in \Czero(\clint{0,T};\ar)$ be such that
	$\indicator_{I_k} \le \phi_k \le \indicator_I$. Then by the lower semicontinuity of the pointwise variation w.r.t.\ the pointwise convergence we have
	\[\begin{split}
			\vartot{\D y}(I_k)
			&= \V(y,I_k) \le \liminf_{n \to \infty}\V(y_k,I_k)
			= \liminf_{n \to \infty} \vartot{\D y_n}(I_k)               \\
			&\le  \liminf_{n \to \infty} \int_{\clint{0,T}} \phi_k \de\vartot{\D y_n}
			= \int_{\clint{0,T}} \phi_k \de\mu    \\
			&\le \int_{\clint{0,T}} \indicator_I\de\mu
			= \mu(I),
		\end{split}\]
	therefore taking the limit as $k \to \infty$ we infer that $\vartot{\D y}(I) \le \mu(I)$. Since every open set $A \subseteq \clint{0,T}$ is a disjoint union of open intervals, we infer that $\vartot{\D y}(A) \le \mu(A)$. This inequality and the outer regularity of the two measures finally yield $\vartot{\D y}(B) \le \mu(B)$ for every Borel set $B$, hence $\D y$ is absolutely continuous with respect to $\mu$.

	Thus there exists  $v \in \L^1(\mu;\H)$ such that $\D y = v\mu$, hence
	\begin{equation}
		\int_{\clint{t,t+h}} \duality{y(s) - z(s)}{v(s)}\de \mu(s) \le \frac{1}{2r}\int_{\clint{t,t+h}} \norm{z(s) - y(s)}{}^2\de \mu(s).
	\end{equation}
	If $\tau$ is a $\mu$-Lebesgue point of $s \longmapsto \duality{y(s) - z(s)}{v(s)}$, dividing both sides by $\mu(\clint{\tau,\tau+h})$ and taking the limit as $h \searrow 0$ we infer that
	\[
		v(\tau) \in -N_{\C(\tau)}(y(\tau)).
	\]
	The conclusion follows from Proposition \ref{propsigma} and from the fact that the set of Lebesgue points of
	$s \longmapsto \duality{y(s) - z(s)}{v(s)}$ has full $\mu$-measure in $\clint{0,T}$. Indeed if $L$ is the set of $\mu$-Lebesgue points for $\tau \longmapsto v(\tau)$, then $L$ has full measure, thus fix $t \in L$, and choose $\zeta_t \in \C(t)$ arbitrarily. Since
	$y \in \Czero(\clint{0,T};\H)$ we have that $t$ is a $\mu$-Lebesgue points of $y$, therefore
	\begin{align}
		 & \frac{1}{\mu(\clint{t,t+h})} \int_{\clint{t,t+h}} \left|\duality{y(\tau) - z(\tau)}{v(\tau)} - \duality{y(t) - z(t)}{v(t)}\right| \de \tau
		\notag                                                                                                                                        \\
		 & \le
		\frac{1}{\mu(\clint{t,t+h})} \int_{\clint{t,t+h}}
		\left(\norm{y-z}{\infty} \norm{v(\tau) - v(t)}{} + \norm{v(t)}{} \norm{y(\tau) - y(t)}{}\right) \de \tau, \label{pto leb 1}
	\end{align}
	thus
	\begin{equation}
		\lim_{h \searrow 0} \frac{1}{\mu(\clint{t,t+h})} \int_{\clint{t,t+h}} \duality{y(\tau) - z(\tau)}{v(\tau)}  \de \mu(\tau)
		= \duality{y(t) - z(t)}{v(t)}. \label{pto leb 2}
	\end{equation}
\end{proof}

%%%%%%%%%%%%%%%%%%%%%%%%%%%%%%%%%%%%%%%%

\subsection{Conclusion}

In this subsection we prove the main theorems \ref{main th1} and \ref{main th2} of the paper by collecting together the results in the previous subsections. Indeed let $(y^n_j)_{j=0}^{J^n}$ be the sequence of discrete solution obtained by the catching-up algorithm in Lemma \ref{catchup}. Moreover let $(y_n)$ be the sequence of discrete solutions of Problem \ref{Pb}, i.e. the sequence of piecewise constant approximated solutions defined by \eqref{yn(0)}--\eqref{yn}.

Under the assumptions of Theorem \ref{main th2}, we have that $\sup_{n \in \en}\V(y_n,\clint{0,T})$ is finite
by virtue of Lemma \ref{TVest-ball}.
Instead under the assumptions of Theorem \ref{main th1}, the boundedness of $\V(y_n,\clint{0,T})$ is deduced
by exploiting Theorem \ref{TV con cond}.

In both situation we can therefore apply Theorem \ref{T:unif conv} and Theorem \ref{limit}, and infer the existence of a solution
of Problem \ref{Pb}. Uniqueness is classic and follows for instance from \cite[Proposition 3.6]{Thi16}.

%%%%%%%%%%%%%%%%%%%%%%%%%%%%%%%%%%%%%%%%
%%%%%%%%%%%%%%%%%%%%%%%%%%%%%%%%%%%%%%%%

\end{document}